\numberwithin{equation}{section}
\theoremstyle{plain}
\newtheorem{theorem}{Theorem}
\newtheorem{lemma}{Lemma}
\newtheorem{prop}[lemma]{Proposition}
\newtheorem{coro}[lemma]{Corollary}
\newtheorem{assump}{Assumption}
\numberwithin{lemma}{section}
\theoremstyle{definition}
\newtheorem{defn}{Definition}
\newtheorem*{notation}{Notation}
\theoremstyle{remark}
\newtheorem*{remark}{Remark}
\DeclareFontFamily{U}{rsfs}{\skewchar\font127 }
\DeclareFontShape{U}{rsfs}{m}{n}{%
   <-6.5> rsfs5
   <6.5-8> rsfs7
   <8-> rsfs10
}{}
\newcommand{\ii}{\mathrm{i}}
\newcommand{\dd}{\mathrm{d}}
\newcommand{\norm}[1]{\lVert #1 \rVert}
\title{Solution Theory of Hamilton-Jacobi-Bellman Equations in Spectral Barron Spaces}
\thanks{This research is supported in part by the National Science Foundation under awards DMS-2309378 and IIS-2403276.}
\date{\today} 
\author{Ye Feng}
\address{Mathematics Department, Duke University}
\email{ye.feng@duke.edu}
\author{Jianfeng Lu}
\address{Department of Mathematics, Department of Physics, Department of Chemistry, Duke University}
\email{jianfeng@math.duke.edu}
\begin{document}

\begin{abstract}
    We study the solution theory of the whole-space static (elliptic) Hamilton-Jacobi-Bellman (HJB) equation in spectral Barron spaces. We prove that under the assumption that the coefficients involved are spectral Barron functions and the discount factor is sufficiently large, there exists a sequence of uniformly bounded spectral Barron functions that converges locally uniformly to the solution. As a consequence, the solution of the HJB equation can be approximated by two-layer neural networks without curse of dimensionality. 
\end{abstract}

\maketitle

\section{Introduction}

Neural networks offer a promising way to tackle numerical solutions to high dimensional partial differential equations (PDEs) by employing a neural network ansatz to the unknown solution and determining parameters in the neural network through optimization (training). This approach has achieved remarkable empirical success over recent years, see e.g., for high dimensional Schr\"odinger equations \cite{hermann2020deep, han2019solving, pfau2020ab}, high dimensional optimal control problems \cite{li2024neural, nusken2021solving,zhou2021actor, han2016deep, nakamura2021adaptive}. For numerical approaches to other high dimensional PDEs based on neural networks, we refer to the reviews \cite{beck2020overview, han2018solving, weinan2021algorithms} and the references therein.   

Alongside these empirical successes, there have been considerable efforts in theoretical analysis, in particular trying to understand the impact of dimension. To overcome the curse of dimensionality, novel solution theory needs to be established for PDEs in high dimension, one of the key issues is to identify function spaces that are appropriate for both approximations using neural networks and also solution theory for high dimensional PDEs. 
The Barron spaces have been often used thanks to its natural connection with two-layer neural network approximation. In the seminal work of Barron \cite{barron1993universal}, it was established that any function $f$ satisfying 
\begin{align}\label{eq:barron_const}
    C_f := \int_{\mathbb{R}^d} |\hat{f}(\xi)| |\xi| \dd \xi < \infty  
\end{align}
can be approximated efficiently in $L^2$-norm by a two-layer neural network $f_n$ of the form 
\begin{align}
    f_n(x) = \sum_{j=1}^n a_j \sigma(w_j \cdot x + b_j), \nonumber
\end{align}
where the number of neurons $n$ is independent of the dimension $d$. Following \cite{barron1993universal}, several function spaces have been proposed to characterize appropriate function classes for neural network approximation. These include the spectral Barron spaces that generalize the Barron's construction (\ref{eq:barron_const}) to some weighted $L^1$-norm of the Fourier transform $\hat{f}$, and the Barron spaces that are defined via the integral representation of $f$ based on some probability distribution on the parameter space. We refer to \cite{wojtowytsch2022representation, ma2022barron} for a comprehensive overview of the function spaces along the direction of Barron. 

Previous works have established solution theory of high dimensional PDEs in Barron space. So far, most works have been restricted to linear elliptic equations, see \cite{lu2021priori, lu2022priori, chen2021representation, chen2023regularity}. While linear PDEs are widely used in applications, one natural question is whether we can extend our understanding of high dimensional PDEs to nonlinear cases, such as the Hamilton-Jacobi-Bellman type equations arising from control problems. 
As far as we are aware, the only work devoted to the solution theory to nonlinear PDEs in Barron space is \cite{marwah2023neural}, where nonlinear elliptic PDEs arising from certain type of calculus of variation was considered. 



The main goal of this paper is to study high dimensional static Hamilton-Jacobi-Bellman (HJB) equations in the whole space: 
\begin{align}\label{eq:HJB_intro}
    - \gamma V(x) + \min_{u \in \mathbb{R}^m} \{\mathcal{L}_u V(x) + l(x,u)\} = 0, \hspace{1em} x \in \mathbb{R}^d,
\end{align}
where $\gamma > 0$ is the discount factor, the infinitesimal generator $\mathcal{L}_u = (f + gu) \cdot \nabla + \Delta$ with $f$ and $g$ being vector/matrix-valued functions of suitable sizes, $l: \mathbb{R}^d \times \mathbb{R}^m \to \mathbb{R}$ is the cost function. A solution of the HJB equation (\ref{eq:HJB_intro}) is a pair of functions $(u, V)$, often referred to as an optimal solution, that satisfies equation (\ref{eq:HJB_intro}) with $u(x) = \mathrm{argmin}_{u \in \mathbb{R}^m} \mathcal{L}_u V(x) +l(x,u)$ for each $x \in \mathbb{R}^d$.

Our main result shows that given the coefficients $f, g$ and $l$ in spectral Barron spaces, if the cost function $l$ is quadratic in $u$ and the discount $\gamma$ is large enough, then there exists a sequence of spectral Barron functions that converges to the optimal solution of (\ref{eq:HJB_intro}) locally uniformly on $\mathbb{R}^d$. Moreover, the sequence of spectral Barron functions is uniformly bounded in the spectral Barron norm, and, as a result, the optimal solution can be approximated by two-layer neural networks without curse of dimensionality. Our result hence provides theoretical support to using neural network approximations for high-dimensional elliptic HJB equations. 

\subsection*{Related works}

Solution theory of PDEs in spectral Barron spaces has been studied in several recent works. In \cite{lu2021priori}, the authors establish regularity results for the Poisson equation and the Schr\"odinger equation on the bounded domain $[0,1]^d$ with homogeneous Neumann boundary condition in a discrete version of spectral Barron space via Fourier cosine expansion, and later similar results are extended to Schr\"odinger eigenvalue problems in \cite{lu2022priori}. The work of \cite{chen2023regularity} proves the regularity of the static Schr\"odinger equation $-\Delta u + Vu = f$ in spectral Barron spaces. In \cite{marwah2023neural}, the authors focus on a class of PDEs known as nonlinear elliptic variational PDEs and establishes the approximation of the solution in spectral Barron space. Recently, it is proved in \cite{yserentant2025regularity} that the solutions of the $N$-electron Schr\"odinger equation with Coulomb singularity for eigenvalues below the essential spectrum lie in the spectral Barron spaces $\mathcal{B}^{s}(\mathbb{R}^{3N})$ for $s < 1$.

Besides the spectral Barron spaces, solution theory of PDEs is also studied in the Barron space based on integral representation. For example, \cite{chen2021representation} approximates the solution of a class of whole-space linear second-order elliptic PDEs in Barron space. The work of \cite{weinan2022some} proves regularity results of the screened Poisson equation $- \Delta u + \lambda^2 u = f$, the heat equation, and a viscous Hamilton-Jacobi equation $u_t - \Delta u + |\nabla u|^2 = 0$ in the Barron space based on integral representation. It also provides some counter-examples when singularity is presented. 

Efforts have also been made in investigating the complexity of approximating PDE solutions via neural networks outside the framework of (spectral) Barron spaces. For example, \cite{Marwah2021ParametricCB} obtains a deep neural network complexity estimate for elliptic PDEs with a homogeneous Dirichlet boundary condition through a proof technique that simulates gradient descent. The study in \cite{grohs2021deep} uses a direct constructive proof to show that the solution to the parabolic HJB equation can be approximated by neural networks without curse of dimensionality. 

\section{Main Results}\label{sec:main_results}

Our focus is to establish an approximation of the solution of the HJB equation (\ref{eq:HJB_intro}) using spectral Barron functions. First, we introduce some notation used in the paper and recall the spectral Barron norm.

\begin{notation}
    We use $|x|$ for the Euclidean norm of a vector $x \in \mathbb{R}^d$ and $B_r(x) = \{y \in \mathbb{R}^d \mid |y - x| < r\}$ for the open ball in $\mathbb{R}^d$ centered at $x$ with radius $r$. For $f \in L^1(\mathbb{R}^d)$, denote its Fourier transform by $\hat{f}$, given by \begin{align}
    \hat{f}(\xi) = \dfrac{1 }{(2\pi)^d} \int_{\mathbb{R}^d} f(x) e^{-\ii x \cdot \xi} \dd x. \nonumber
\end{align}
One can extend the definition of Fourier transform for $f \in \mathcal{S}'(\mathbb{R}^d)$. If $\hat{f} \in L^1(\mathbb{R}^n)$, its inverse Fourier transform is given by 
\begin{align}
    f(x) = \int_{\mathbb{R}^d} \hat{f}(\xi) e^{\ii x \cdot \xi} \dd \xi. \nonumber
\end{align}
\end{notation}

We define the spectral Barron space following the definitions used in \cite{xu2020finite, siegel2022high, siegel2023characterization, siegel2024sharp}. Similar (equivalent) definitions of spectral Barron norms were also used in \cite{chen2023regularity, yserentant2025regularity, lu2022priori, marwah2023neural, weinan2022some, wojtowytsch2022representation}. 

\begin{defn}
    Let $s \in \mathbb{R}$ and $f \in \mathcal{S}'(\mathbb{R}^d)$. The spectral Barron norm of $f$ is defined as 
    \begin{align}
        \Vert f \Vert_{\mathcal{B}^s(\mathbb{R}^d)} := \int_{\mathbb{R}^d} |\hat{f}(\xi)| \left(1 + |\xi|\right)^{s} \dd \xi. \nonumber
    \end{align}
    The spectral Barron space (of order $s$) is defined as 
    \begin{align}
        \mathcal{B}^s(\mathbb{R}^d) := \left\{ f \in \mathcal{S}'(\mathbb{R}^d) \mid \Vert f \Vert_{\mathcal{B}^s(\mathbb{R}^d)} < \infty\right\}. \nonumber
    \end{align}
\end{defn}

The reason that spectral Barron spaces are of particular interest for high-dimensional problems is that a spectral Barron function possesses certain smoothness related to the order of the spectral Barron norm and can be efficiently approximated in $H^k$ norm by a two-layer neural network on any bounded domain without the curse of dimensionality; see Proposition \ref{prop:nn_apprx_spec_barron} in Section \ref{sec:proofs} below. The completeness of the spectral Barron space comes from the fact that the spectral Barron norm is a weighted $L^1$ norm of the Fourier transform. Therefore, the spectral Barron space $\mathcal{B}^{s}(\mathbb{R}^d)$ is a Banach space. 

Since we frequently deal with vector/matrix-valued functions throughout this paper, we introduce the following convention to ease our notation.

\begin{notation}
    For any matrix-valued function $f: \mathbb{R}^d \to \mathbb{R}^{m \times n}$, we say $f = (f_{ij}) \in \mathcal{B}^{s}(\mathbb{R}^d)$ if $f_{ij} \in \mathcal{B}^{s}(\mathbb{R}^d)$ for every $1 \leq i \leq m$ and $1 \leq j \leq n$ and denote
    \begin{align}
        \Vert f \Vert_{\mathcal{B}^{s}(\mathbb{R}^d)} := \sum_{i=1}^{m} \sum_{j=1 }^{n} \Vert f_{ij} \Vert_{\mathcal{B}^{s}(\mathbb{R}^d)}. \nonumber
    \end{align}
\end{notation}

In this paper, we consider the cost function to be quadratic in $u$, that is, $l(x,u) = \ell(x) + \norm{u}_R^2$ where $\ell: \mathbb{R}^d \to [0, \infty)$ is the running cost and $\Vert u \Vert_R^2 = \langle u, u \rangle_R:= u^T R u$ is the control cost defined by some positive definite matrix $R \in \mathbb{R}^{m \times m}$. In this case, the HJB equation (\ref{eq:HJB_intro}) becomes
\begin{align}\label{eq:HJB_qd}
    - \gamma V(x) + \min_{u \in \mathbb{R}^m} \{\mathcal{L}_u V(x) + \ell(x) + \Vert u \Vert_R^2\} = 0, \hspace{1em} x \in \mathbb{R}^d.
\end{align}
We remark that it is straightforward to extend the results to the case where the matrix $R = R(x)$ depends on $x$ (with the assumption that $R(x)$ is uniformly positive definite and lies in spectral Barron space), while we stay with constant $R$ for simplicity.

To state our main result, we make the assumption that the coefficient functions $f$, $g$, $\ell$ lie in spectral Barron spaces. In particular, we assume $g$ to be non-zero, otherwise the solution of equation (\ref{eq:HJB_qd}) is trivial. The order of the spectral Barron norm is specified in the statement of the subsequent theorems. 
\begin{assump}\label{asp:coef}
    Assume that the coefficients $f, g, \ell\in \mathcal{B}^{s}(\mathbb{R}^d)$ such that $\Vert g \Vert_{\mathcal{B}^{s}(\mathbb{R}^d)} > 0$. 
\end{assump}

Our main theorem is stated as follows. 

\begin{theorem}\label{thm:main_uni_conv}
    Suppose that Assumption \ref{asp:coef} holds for $s \geq 2$. Then there exists a positive constant $C(f, g, \ell, R)$ depending on $f$, $g$, $\ell$ and $R$ for which if the discount $\gamma \geq C(f, g, \ell, R)$, then there exists a sequence $\{u^{(i)}\}$ in $\mathcal{B}^{s}(\mathbb{R}^d)$ converging to $\bar{u}$ in $C^1_{\mathrm{loc}}(\mathbb{R}^d)$ and a sequence $\{V^{(i)}\}$ in $\mathcal{B}^{s+1}(\mathbb{R}^d)$ converging to $\bar{V}$ in $C^2_{\mathrm{loc}}(\mathbb{R}^d)$, where $(\bar{u}, \bar{V})$ is an optimal solution of the HJB equation (\ref{eq:HJB_qd}). 
\end{theorem}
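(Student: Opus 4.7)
The plan is to use the quadratic structure in $u$ to eliminate the inner minimization and reduce (\ref{eq:HJB_qd}) to a quasilinear scalar PDE for $V$ alone. Completing the square in $u$ shows that the pointwise minimizer is $u^{\ast}(x) = -\tfrac{1}{2} R^{-1} g(x)^T \nabla V(x)$, and substituting back converts (\ref{eq:HJB_qd}) into
\begin{equation*}
    -\gamma V + f \cdot \nabla V + \Delta V + \ell - \tfrac{1}{4} (\nabla V)^T g R^{-1} g^T \nabla V = 0.
\end{equation*}
It therefore suffices to construct a sequence $\{V^{(i)}\} \subset \mathcal{B}^{s+1}(\mathbb{R}^d)$ approximating a solution $\bar{V}$ of this equation in $C^2_{\mathrm{loc}}$; the companion sequence is then defined by $u^{(i)} = -\tfrac{1}{2} R^{-1} g^T \nabla V^{(i-1)}$.

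The key auxiliary estimate I would establish is a linear solvability lemma in Barron space: for $b \in \mathcal{B}^{s}(\mathbb{R}^d;\mathbb{R}^d)$ and $h \in \mathcal{B}^{s}(\mathbb{R}^d)$, whenever $\gamma$ dominates $\|b\|_{\mathcal{B}^{s}}^2$, the equation $-\gamma V + b \cdot \nabla V + \Delta V = -h$ has a unique solution $V \in \mathcal{B}^{s+1}(\mathbb{R}^d)$ with $\|V\|_{\mathcal{B}^{s+1}} \lesssim \|h\|_{\mathcal{B}^{s}}/\sqrt{\gamma}$. The proof is a Banach fixed point for
\begin{equation*}
    V \mapsto \mathcal{F}^{-1}\left[\frac{\hat{h}(\xi) - \widehat{b \cdot \nabla V}(\xi)}{\gamma + |\xi|^2}\right],
\end{equation*}
built from two ingredients: the multiplier bound $(1+|\xi|)^{s+1}/(\gamma + |\xi|^2) \lesssim (1+|\xi|)^{s}/\sqrt{\gamma}$, which follows from $\gamma + |\xi|^2 \geq 2\sqrt{\gamma}|\xi|$, and the Banach algebra property $\|fg\|_{\mathcal{B}^{s}} \lesssim \|f\|_{\mathcal{B}^{s}}\|g\|_{\mathcal{B}^{s}}$ of the spectral Barron norm, which is standard given that the Barron norm is a weighted $L^1$ norm of the Fourier transform.

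With this lemma I would then run a policy iteration: set $u^{(0)} \equiv 0$, and given $u^{(i)} \in \mathcal{B}^{s}$, obtain $V^{(i)} \in \mathcal{B}^{s+1}$ by solving
\begin{equation*}
    -\gamma V^{(i)} + (f + g u^{(i)}) \cdot \nabla V^{(i)} + \Delta V^{(i)} = -\ell - (u^{(i)})^T R u^{(i)},
\end{equation*}
then set $u^{(i+1)} = -\tfrac{1}{2} R^{-1} g^T \nabla V^{(i)}$. The algebra property controls the drift term $\|g u^{(i)}\|_{\mathcal{B}^{s}}$ and the control cost $\|(u^{(i)})^T R u^{(i)}\|_{\mathcal{B}^{s}} \lesssim \|u^{(i)}\|_{\mathcal{B}^{s}}^2$, and a short induction using the linear estimate shows that once $\gamma$ exceeds some threshold $C(f,g,\ell,R)$, the iteration preserves a ball $\|u^{(i)}\|_{\mathcal{B}^{s}} \leq M_1$, $\|V^{(i)}\|_{\mathcal{B}^{s+1}} \leq M_2$ (with $M_1$ of order $1/\sqrt{\gamma}$).

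For convergence, the differences $V^{(i+1)} - V^{(i)}$ satisfy a linear equation of the same form whose source is governed by $V^{(i)} - V^{(i-1)}$ through the Lipschitz bound on $u^{(i)} \mapsto (u^{(i)})^T R u^{(i)}$ and $u^{(i)} \mapsto g u^{(i)}$ on the invariant ball. The $1/\sqrt{\gamma}$ gain from the linear lemma then produces a strict contraction for $\gamma$ large enough, so $\{V^{(i)}\}$ is Cauchy in $\mathcal{B}^{s+1}$, converging to some $\bar{V} \in \mathcal{B}^{s+1}$; correspondingly $u^{(i)} \to \bar{u} \in \mathcal{B}^{s}$. Since $|D^{\alpha} w(x)| \leq \|w\|_{\mathcal{B}^{|\alpha|}}$, Barron convergence upgrades to uniform $C^{s+1}$ convergence, in particular to $C^2_{\mathrm{loc}}$ for $V^{(i)}$ and $C^1_{\mathrm{loc}}$ for $u^{(i)}$; continuity of multiplication, differentiation, and the linear solve in these norms lets me pass to the limit termwise and identify $(\bar{u}, \bar{V})$ as an optimal solution. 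The main obstacle is closing the nonlinear feedback loop: the quadratic gradient term scales like $\|g\|_{\mathcal{B}^{s}}^2 \|V\|_{\mathcal{B}^{s+1}}^2$ while the linear solve only gains $\gamma^{-1/2}$, so the threshold $C(f,g,\ell,R)$ must be chosen to simultaneously make the self-map close on a small ball \emph{and} drive the contraction factor strictly below one.
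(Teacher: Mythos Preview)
Your proposal is correct and the overall architecture---reduce to the scalar quasilinear equation, solve the linearized problem by a Neumann-series bound with $1/\sqrt{\gamma}$ gain, and run policy iteration inside an invariant ball---matches the paper's Propositions~\ref{prop:bdd_gamma_delta} and~\ref{prop:bdd_iterates} almost exactly. The genuine divergence is in how you pass to the limit. The paper does \emph{not} prove a contraction: instead it introduces the stochastic control formulation, uses It\^o's formula to show that $\{V^{(i)}\}$ is pointwise monotone decreasing (Proposition~\ref{prop:ply_iter_spec_barron}), and then combines the uniform Barron bound with Arzel\`a--Ascoli and a diagonal argument to extract a subsequence converging in $C^2_{\mathrm{loc}}$; the limit is only shown to lie in $C^2(\mathbb{R}^d)$. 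Your contraction argument is more elementary---it bypasses the probabilistic representation and compactness entirely---and in fact yields a stronger conclusion than the paper states: you get convergence of the \emph{full} sequence in the $\mathcal{B}^{s+1}$ norm, so $\bar V\in\mathcal{B}^{s+1}(\mathbb{R}^d)$ and the convergence is globally uniform rather than merely local. What the paper's route buys is the structural monotonicity of policy iteration (a feature inherited from optimal control that would survive even without quantitative contraction), but for the theorem as stated your direct fixed-point approach is both cleaner and sharper.
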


Theorem \ref{thm:main_uni_conv} implies the existence of a solution to (\ref{eq:HJB_qd}), which can be approximated locally uniformly by spectral Barron functions. The next theorem shows that the solution given by Theorem~\ref{thm:main_uni_conv} can be approximated by two-layer neural networks on any compact domain without curse of dimensionality. 

\begin{theorem}\label{thm:main_nn_appx}
    Under the same assumptions of Theorem \ref{thm:main_uni_conv}, let $\{u^{(i)}\}$ and $\{V^{(i)}\}$ be the sequences of spectral Barron functions and $(\bar{u}, \bar{V})$ be the optimal solution of the HJB equation (\ref{eq:HJB_qd}) given by Theorem \ref{thm:main_uni_conv}. Then for any compact domain $K \subset \mathbb{R}^d$ and for any $n \in \mathbb{N}$, there exist cosine-activated two-layer neural networks with $n$ hidden neurons
    \begin{align}
        u_n(x) = \dfrac{1}{n} \sum_{j=1}^{n} a_j \cos(w_j \cdot x + b_j), \quad V_n(x) = \dfrac{1}{n} \sum_{j=1}^{n} c_j \cos(v_j \cdot x + d_j), \nonumber
    \end{align}
    where $a_j, b_j, c_j, d_j \in \mathbb{R}$ and $w_j, v_j \in \mathbb{R}^d$ for each $j$, such that 
    \begin{align}
        \Vert u_n - \bar{u} \Vert_{H^1(K)} \leq \sqrt{|K|} \left(1 + \sup_{i \in \mathbb{N}} \Vert u^{(i)} \Vert_{\mathcal{B}^{1}(\mathbb{R}^d)}  \right) n^{-1/2}, \nonumber
    \end{align}
    and 
    \begin{align}
        \Vert V_n - \bar{V} \Vert_{H^2(K)} \leq \sqrt{|K|} \left(1 + \sup_{i \in \mathbb{N}} \Vert V^{(i)} \Vert_{\mathcal{B}^{2}(\mathbb{R}^d)}  \right) n^{-1/2}, \nonumber
    \end{align}
    where $|K|$ denotes the Lebesgue measure of $K$. 
\end{theorem}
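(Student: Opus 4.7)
The plan is to combine two ingredients: Proposition~\ref{prop:nn_apprx_spec_barron}, which gives Barron-type cosine-network approximation for spectral Barron functions, together with the $C^k_{\mathrm{loc}}$ convergence provided by Theorem~\ref{thm:main_uni_conv}. The ``$1$'' inside the parenthesis in the claimed bound already suggests the decomposition: one piece of the error comes from approximating a fixed element $u^{(i)}$ of the sequence by a two-layer network, and the other piece comes from the tail of the convergence $u^{(i)} \to \bar{u}$.

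Fix a compact set $K \subset \mathbb{R}^d$ and $n \in \mathbb{N}$. Because $s \geq 2$, the monotonicity $\Vert \cdot \Vert_{\mathcal{B}^{t}(\mathbb{R}^d)} \leq \Vert \cdot \Vert_{\mathcal{B}^{s}(\mathbb{R}^d)}$ for $t \leq s$ gives continuous embeddings $\mathcal{B}^{s}(\mathbb{R}^d) \hookrightarrow \mathcal{B}^{1}(\mathbb{R}^d)$ and $\mathcal{B}^{s+1}(\mathbb{R}^d) \hookrightarrow \mathcal{B}^{2}(\mathbb{R}^d)$, so each $u^{(i)}$ lies in $\mathcal{B}^{1}$ and each $V^{(i)}$ lies in $\mathcal{B}^{2}$. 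Applying Proposition~\ref{prop:nn_apprx_spec_barron} to each $u^{(i)}$ and $V^{(i)}$ produces cosine-activated two-layer networks $\tilde{u}_n^{(i)}$ and $\tilde{V}_n^{(i)}$ with $n$ hidden neurons satisfying
\begin{align*}
    \Vert \tilde{u}_n^{(i)} - u^{(i)} \Vert_{H^1(K)} \leq \sqrt{|K|}\, \Vert u^{(i)} \Vert_{\mathcal{B}^{1}(\mathbb{R}^d)}\, n^{-1/2}, \qquad \Vert \tilde{V}_n^{(i)} - V^{(i)} \Vert_{H^2(K)} \leq \sqrt{|K|}\, \Vert V^{(i)} \Vert_{\mathcal{B}^{2}(\mathbb{R}^d)}\, n^{-1/2},
\end{align*}
with the dependence on $i$ entering only through the Barron norms. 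Since $u^{(i)} \to \bar{u}$ in $C^1_{\mathrm{loc}}$ and $K$ has finite Lebesgue measure, the functions and their first derivatives converge uniformly on $K$, so $\Vert u^{(i)} - \bar{u} \Vert_{H^1(K)} \leq \sqrt{|K|}\, \Vert u^{(i)} - \bar{u} \Vert_{C^1(K)} \to 0$; analogously $\Vert V^{(i)} - \bar{V} \Vert_{H^2(K)} \to 0$. Hence for each $n$ I can choose an index $i(n)$ large enough that both of these tail errors are at most $\sqrt{|K|}\, n^{-1/2}$.

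Setting $u_n := \tilde{u}_n^{(i(n))}$ and $V_n := \tilde{V}_n^{(i(n))}$, the triangle inequality together with $\Vert u^{(i(n))}\Vert_{\mathcal{B}^{1}(\mathbb{R}^d)} \leq \sup_i \Vert u^{(i)}\Vert_{\mathcal{B}^{1}(\mathbb{R}^d)}$ (and similarly for $V$) then yields the two claimed bounds. The argument is essentially routine once the two ingredients are in hand, so the only subtle point --- and the one item that needs care --- is ensuring that $\sup_i \Vert u^{(i)} \Vert_{\mathcal{B}^{1}(\mathbb{R}^d)}$ and $\sup_i \Vert V^{(i)} \Vert_{\mathcal{B}^{2}(\mathbb{R}^d)}$ are in fact finite; otherwise the stated estimate is vacuous. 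This uniform Barron-norm control on the approximating sequences is itself part of the content of Theorem~\ref{thm:main_uni_conv} (and is asserted explicitly in the introduction), so invoking that theorem as stated closes the argument.
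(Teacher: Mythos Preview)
Your proposal is correct and follows essentially the same approach as the paper: split the error via the triangle inequality into a network-approximation piece (handled by Proposition~\ref{prop:nn_apprx_spec_barron}) and a tail piece (handled by the $C^k_{\mathrm{loc}}$ convergence), choosing the index $i(n)$ so that the tail contributes at most $\sqrt{|K|}\,n^{-1/2}$. The only cosmetic difference is that the paper cites Proposition~\ref{prop:bdd_iterates} directly for the uniform Barron-norm bound rather than folding it into Theorem~\ref{thm:main_uni_conv}, but your observation that this boundedness is part of the construction behind Theorem~\ref{thm:main_uni_conv} is exactly right.
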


\begin{remark}
    In Theorem \ref{thm:main_nn_appx}, we focus on neural networks with cosine activation functions, as these are most directly connected to the spectral Barron space, which is defined via the Fourier transform. As shown in \cite{barron1993universal} and \cite{siegel2020approximation}, the spectral Barron space also characterizes functions that can be efficiently approximated by neural networks with more general activation functions, such as sigmoidal, ReLU, or SoftPlus. Consequently, the results of Theorem \ref{thm:main_nn_appx} can be extended to a broader class of activation functions, though we will not go into details.
\end{remark} 

\section{Proofs}\label{sec:proofs}

This section is devoted to the proof of Theorem \ref{thm:main_uni_conv} and Theorem \ref{thm:main_nn_appx}. 

\subsection{Preliminaries} 

In this subsection, we present some preliminary results on the properties of the spectral Barron space. First, we state the approximation theorem for spectral Barron functions. The proof is given in Appendix \ref{app:proofs_conv}.  

\begin{prop}\label{prop:nn_apprx_spec_barron}
    Let $k \in \mathbb{N}$. If $f \in \mathcal{B}^{k}(\mathbb{R}^d)$, then $f \in C^k(\mathbb{R}^d)$ and for any compact domain $K \subset \mathbb{R}^d$ and any $n \in \mathbb{N}$, there exists a cosine-activated two-layer neural network with $n$ hidden neurons 
    \begin{align}
        f_n(x) = \dfrac{1 }{n} \sum_{j=1}^{n} a_j \cos\left( w_j \cdot x + b_j \right) \nonumber
    \end{align}
    where $a_j, b_j \in \mathbb{R}$ and $w_j \in \mathbb{R}^d$ for each $j$, such that 
    \begin{align}
        \Vert f_n - f \Vert_{H^k(K)} \leq \sqrt{|K|} \Vert f \Vert_{\mathcal{B}^{k}(\mathbb{R}^d)} n^{-1/2}, \nonumber
    \end{align}
    where $|K|$ is the Lebesgue measure of $K$. 
\end{prop}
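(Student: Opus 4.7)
The plan is to combine the Fourier inversion formula for $f$ with Maurey's empirical (probabilistic) method, using a sampling distribution weighted by $(1+|\xi|)^k$ so that cosine derivatives up to order $k$ can be absorbed by the weight. The $C^k$-regularity comes essentially for free: for every $|\alpha|\le k$, the estimate $|\xi|^{|\alpha|}|\hat{f}(\xi)|\le(1+|\xi|)^k|\hat{f}(\xi)|$ shows that $(\ii\xi)^\alpha\hat{f}(\xi)\in L^1(\mathbb{R}^d)$ with $L^1$-norm bounded by $\Vert f\Vert_{\mathcal{B}^k(\mathbb{R}^d)}$, so differentiation under the inverse Fourier integral is justified and realizes $\partial^\alpha f$ as a bounded continuous function by dominated convergence.

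For the approximation part, I would assume $f$ real-valued (the general case reduces to this by splitting into real and imaginary parts). Writing $\hat{f}(\xi)=|\hat{f}(\xi)|e^{\ii\phi(\xi)}$, the inverse Fourier transform becomes
\begin{align*}
f(x)=\int_{\mathbb{R}^d}|\hat{f}(\xi)|\cos(\xi\cdot x+\phi(\xi))\,\dd\xi.
\end{align*}
Setting $C:=\Vert f\Vert_{\mathcal{B}^k(\mathbb{R}^d)}$, I introduce the probability measure $\mu_k(\dd\xi):=C^{-1}|\hat{f}(\xi)|(1+|\xi|)^k\,\dd\xi$ together with the \emph{atom} $G(x;\xi):=C(1+|\xi|)^{-k}\cos(\xi\cdot x+\phi(\xi))$, so that $f(x)=\mathbb{E}_{\xi\sim\mu_k}[G(x;\xi)]$. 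Drawing $\xi_1,\dots,\xi_n$ i.i.d.\ from $\mu_k$ and defining $f_n(x):=\tfrac{1}{n}\sum_{j=1}^nG(x;\xi_j)$ yields exactly a cosine-activated two-layer network of the claimed form with $w_j=\xi_j$, $b_j=\phi(\xi_j)$, and amplitudes $a_j=C(1+|\xi_j|)^{-k}$.

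Next I would bound $\mathbb{E}\bigl[\Vert f_n-f\Vert_{H^k(K)}^2\bigr]$ by Fubini and independence. For each $|\alpha|\le k$, the derivative bound $|\partial_x^\alpha\cos(\xi\cdot x+\phi)|\le|\xi|^{|\alpha|}$ combined with weight cancellation produces the uniform estimate
\begin{align*}
|\partial_x^\alpha G(x;\xi)|\le C|\xi|^{|\alpha|}(1+|\xi|)^{-k}\le C.
\end{align*}
Viewing $G(\,\cdot\,;\xi)$ as a random element of the Hilbert space $H^k(K)$ with mean $f$, the standard Maurey-type estimate yields $\mathbb{E}\bigl[\Vert f_n-f\Vert_{H^k(K)}^2\bigr]\le|K|C^2/n$, and the probabilistic method then supplies a deterministic realization of the random network that meets the stated bound.

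The main obstacle is obtaining control in the Sobolev $H^k$ norm rather than merely the $L^2$ norm of Barron's classical argument: differentiating the cosine atoms pulls out factors $|\xi|^{|\alpha|}$ that would be unbounded under the naive sampling measure $\mu\propto|\hat{f}|$. The essential device is the reshuffling that places $(1+|\xi|)^k$ inside the sampling measure $\mu_k$ and the compensating $(1+|\xi|)^{-k}$ inside each amplitude; differentiating up to order $k$ consumes precisely this extra weight, and the appearance of the $\mathcal{B}^k$-norm in the final estimate is exactly the price paid for this rebalancing.
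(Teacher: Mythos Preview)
Your proposal is correct and follows exactly the paper's proof: the same weighted sampling measure $\mu_k(\dd\xi)\propto|\hat f(\xi)|(1+|\xi|)^k\,\dd\xi$, the same cosine atoms with amplitudes $\Vert f\Vert_{\mathcal{B}^k}(1+|\xi|)^{-k}$, and the same Maurey-type variance computation after Fubini. One small point to tighten: bounding $|\partial_x^\alpha G|\le C$ termwise and then summing over all $|\alpha|\le k$ would insert an unwanted combinatorial factor counting multi-indices; to obtain the stated constant $|K|\,\Vert f\Vert_{\mathcal{B}^k}^2/n$ you should instead use the collective inequality $\sum_{|\alpha|\le k}(\xi^\alpha)^2\le\sum_{m=0}^k|\xi|^{2m}\le(1+|\xi|)^{2k}$, which is precisely the bound the paper invokes.
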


The following embedding properties are immediate from the definition of spectral Barron norm. 

\begin{prop}\label{prop:barron_embed}
   The following embeddings hold. 
    \begin{itemize}
        \item[(1)] $\mathcal{B}^{s}(\mathbb{R}^d) \hookrightarrow L^{\infty}(\mathbb{R}^d)$ with $\Vert f \Vert_{L^{\infty}(\mathbb{R}^d)} \leq \Vert f \Vert_{\mathcal{B}^{s}(\mathbb{R}^d)}$ for any $f \in \mathcal{B}^{s}(\mathbb{R}^d)$ if $s \geq 0$;
        \item[(2)] $\mathcal{B}^{s'}(\mathbb{R}^d) \hookrightarrow \mathcal{B}^{s}(\mathbb{R}^d)$ with $\Vert f \Vert_{\mathcal{B}^{s}(\mathbb{R}^d)} \leq \Vert f \Vert_{\mathcal{B}^{s'}(\mathbb{R}^d)}$ for any $f \in \mathcal{B}^{s'}(\mathbb{R}^d)$ if $s, s' \in \mathbb{R}$ satisfy $s' \geq s$.
    \end{itemize}
\end{prop}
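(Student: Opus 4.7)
Both statements should follow directly from the definition of the spectral Barron norm together with Fourier inversion, so the proof will essentially be a two-line verification rather than anything substantive. I will treat each item separately.

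For part (1), the plan is to show first that $f \in \mathcal{B}^s(\mathbb{R}^d)$ forces $\hat f \in L^1(\mathbb{R}^d)$, which places us inside the hypothesis of the inverse Fourier formula introduced in the Notation block. Since $(1+|\xi|)^s \geq 1$ pointwise for $s \geq 0$, the estimate
\begin{align}
\int_{\mathbb{R}^d} |\hat f(\xi)|\,\dd\xi \leq \int_{\mathbb{R}^d} |\hat f(\xi)|(1+|\xi|)^s\,\dd\xi = \Vert f \Vert_{\mathcal{B}^s(\mathbb{R}^d)} \nonumber
\end{align}
is immediate. Then, using $f(x) = \int_{\mathbb{R}^d} \hat f(\xi) e^{\ii x \cdot \xi} \,\dd\xi$ (valid a.e., and in fact everywhere after modifying $f$ on a null set since the right-hand side is continuous by dominated convergence), I bound $|f(x)| \leq \int |\hat f(\xi)|\,\dd\xi$ uniformly in $x$, giving the $L^\infty$ bound.

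For part (2), the plan is even shorter: since $s' \geq s \geq 0$, the pointwise inequality $(1+|\xi|)^s \leq (1+|\xi|)^{s'}$ holds for every $\xi \in \mathbb{R}^d$, so integrating $|\hat f(\xi)|$ against both weights and applying monotonicity of the integral yields $\Vert f \Vert_{\mathcal{B}^s(\mathbb{R}^d)} \leq \Vert f \Vert_{\mathcal{B}^{s'}(\mathbb{R}^d)}$, which establishes the continuous inclusion.

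There is no real obstacle here; the only minor subtlety worth a sentence is clarifying that the $L^\infty$ bound in (1) should be read as holding for the continuous representative of $f$ obtained through Fourier inversion, which is the canonical choice once $\hat f \in L^1(\mathbb{R}^d)$.
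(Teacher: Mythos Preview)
Your proposal is correct and matches the paper's treatment: the paper declares both embeddings ``immediate from the definition of spectral Barron norm'' and gives no written proof, and your argument is precisely the one-line verification (pointwise weight comparison plus Fourier inversion for (1), pointwise weight monotonicity for (2)) that justifies that remark.
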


The proposition below indicates that spectral Barron spaces are closed under product and differentiation of functions. The proof is given in Appendix \ref{app:proofs_conv}. We also remark that the differential operator $\partial_{i}$ is well-defined on the spectral Barron space $\mathcal{B}^{s+1}(\mathbb{R}^d)$ when $s \geq 0$ as a result of the regularity of spectral Barron functions shown in Proposition \ref{prop:nn_apprx_spec_barron}. 

\begin{prop}\label{prop:barron_alg}
    Let $s \geq 0$. Then 
    \begin{itemize}
        \item[(1)] For any $f, g \in \mathcal{B}^s(\mathbb{R}^d)$, $fg \in \mathcal{B}^{s}(\mathbb{R}^d)$ with 
        \begin{align}
            \Vert fg \Vert_{\mathcal{B}^s(\mathbb{R}^d)}  \leq \Vert f \Vert_{\mathcal{B}^s(\mathbb{R}^d)} \Vert g \Vert_{\mathcal{B}^s(\mathbb{R}^d)}. \nonumber
        \end{align}
        \item[(2)] For any $1 \leq i\leq d$, the differential operator $\partial_{i} : \mathcal{B}^{s+1}(\mathbb{R}^d) \to \mathcal{B}^{s}(\mathbb{R}^d)$ is a bounded linear operator with
        \begin{align}
            \Vert \partial_{i} f \Vert_{\mathcal{B}^{s}(\mathbb{R}^d)} \leq \Vert f \Vert_{\mathcal{B}^{s+1}(\mathbb{R}^d)}. \nonumber
        \end{align}
    \end{itemize}
\end{prop}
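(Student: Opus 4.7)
The plan is to prove both parts by passing to the Fourier side and using the convolution/multiplier structure that defines the spectral Barron norm.

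For part (1), I would start from the fact that the Fourier transform intertwines multiplication and convolution: with the convention fixed in the paper, one checks that $\widehat{fg} = \hat{f}*\hat{g}$ (essentially by writing $f$ and $g$ via the inverse Fourier formula and swapping integrals, which is justified because $\hat{f},\hat{g}\in L^1$ by the assumption $f,g\in\mathcal{B}^s$). Thus $|\widehat{fg}(\xi)|\le (|\hat{f}|*|\hat{g}|)(\xi)$. The key elementary inequality I would then invoke is
\begin{equation}
(1+|\xi|)^s \le (1+|\xi-\eta|)^s(1+|\eta|)^s \qquad \text{for all } s\ge 0, \nonumber
\end{equation}
which follows from $1+|\xi|\le 1+|\xi-\eta|+|\eta|\le (1+|\xi-\eta|)(1+|\eta|)$ and monotonicity of $t\mapsto t^s$. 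Substituting this into the definition of $\|fg\|_{\mathcal{B}^s}$, applying Tonelli to separate the $\xi$ and $\eta$ integrals, and performing the change of variables $\xi\mapsto \xi-\eta$ in the inner integral, I would factor the double integral as a product and conclude $\|fg\|_{\mathcal{B}^s}\le \|f\|_{\mathcal{B}^s}\|g\|_{\mathcal{B}^s}$.

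For part (2), I would use the fact that spectral Barron functions are (by Proposition~\ref{prop:nn_apprx_spec_barron}, if $s\ge 1$, and directly from the absolute convergence of $\int\hat{f}(\xi)e^{ix\cdot\xi}\,\dd\xi$ otherwise) smooth enough that $\partial_i f$ is given classically by differentiating under the integral, yielding $\widehat{\partial_i f}(\xi)=\ii\xi_i\,\hat{f}(\xi)$. Then
\begin{equation}
\|\partial_i f\|_{\mathcal{B}^s(\mathbb{R}^d)} = \int_{\mathbb{R}^d}|\xi_i||\hat{f}(\xi)|(1+|\xi|)^s\,\dd\xi \le \int_{\mathbb{R}^d}|\hat{f}(\xi)|(1+|\xi|)^{s+1}\,\dd\xi = \|f\|_{\mathcal{B}^{s+1}(\mathbb{R}^d)}, \nonumber
\end{equation}
using $|\xi_i|\le |\xi|\le 1+|\xi|$. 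Linearity of $\partial_i$ is immediate from linearity of differentiation.

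There is no real obstacle here; the only mildly delicate point is the justification that $\widehat{fg}=\hat{f}*\hat{g}$ as $L^1$ functions with the chosen normalization (one wants $\hat{f},\hat{g}\in L^1$, which holds because $\mathcal{B}^s\hookrightarrow \mathcal{B}^0$ by Proposition~\ref{prop:barron_embed}), and the corresponding fact that $fg\in L^1$ so that $\widehat{fg}$ is defined in the paper's sense. For matrix/vector-valued $f,g$, the extension is routine by applying the scalar inequality entrywise and summing, using the sub-multiplicativity of the convention in the vector/matrix Barron norm.
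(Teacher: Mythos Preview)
Your proposal is correct and follows essentially the same route as the paper: convolution on the Fourier side together with the submultiplicative weight inequality $(1+|\xi|)^s\le(1+|\xi-\eta|)^s(1+|\eta|)^s$ for part~(1), and the multiplier identity $\widehat{\partial_i f}=\ii\xi_i\hat f$ with $|\xi_i|\le 1+|\xi|$ for part~(2). If anything, you are slightly more careful than the paper in flagging the justification of $\widehat{fg}=\hat f*\hat g$ and of $fg\in L^1$ (the latter follows since $f\in L^\infty$ by Proposition~\ref{prop:barron_embed} and $g\in L^1$ by definition).
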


The following estimates of the bound of $(\gamma I - \Delta)^{-1}$ in spectral Barron spaces will be useful in the subsequent analysis. 

\begin{prop}\label{prop:bdd_gamma_delta}
    Let $s \geq 0$. If $ \gamma > 0$, then for any $f \in \mathcal{B}^{s}(\mathbb{R}^d)$, 
    \begin{align}
        \Vert (\gamma I - \Delta)^{-1} f \Vert_{\mathcal{B}^{s}(\mathbb{R}^d)}  & \leq \gamma^{-1} \Vert f \Vert_{\mathcal{B}^{s}(\mathbb{R}^d)}, \nonumber \\
        \Vert (\gamma I - \Delta)^{-1} f \Vert_{\mathcal{B}^{s+1}(\mathbb{R}^d)} & \leq \dfrac{1 }{2(\sqrt{1+\gamma} -1)} \Vert f \Vert_{\mathcal{B}^{s}(\mathbb{R}^d)}, \nonumber \\
        \Vert (\gamma I - \Delta)^{-1} f \Vert_{\mathcal{B}^{s+2}(\mathbb{R}^d)} & \leq \left(1+ \dfrac{1}{\gamma}\right) \Vert f \Vert_{\mathcal{B}^{s}(\mathbb{R}^d)}. \nonumber
    \end{align}
\end{prop}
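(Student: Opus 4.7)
The plan is to move to the Fourier side, where $(\gamma I-\Delta)^{-1}$ is simply the multiplication operator by the symbol $m_\gamma(\xi) := (\gamma+|\xi|^2)^{-1}$. Consequently, for any $f\in\mathcal{B}^s(\mathbb{R}^d)$ and any $k\geq 0$,
\begin{align*}
\Vert(\gamma I-\Delta)^{-1}f\Vert_{\mathcal{B}^{s+k}(\mathbb{R}^d)}
= \int_{\mathbb{R}^d} \frac{|\hat f(\xi)|\,(1+|\xi|)^{s+k}}{\gamma+|\xi|^2}\,\dd\xi
\leq M_k(\gamma)\,\Vert f\Vert_{\mathcal{B}^s(\mathbb{R}^d)},
\end{align*}
where $M_k(\gamma) := \sup_{\xi\in\mathbb{R}^d}\frac{(1+|\xi|)^k}{\gamma+|\xi|^2}$. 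So the whole proposition reduces to computing $M_0, M_1, M_2$ explicitly.

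For each $k$ I would set $t=|\xi|\geq 0$, reduce to the one-variable problem $\varphi_k(t) := (1+t)^k/(\gamma+t^2)$, and find the maximum by elementary calculus. The case $k=0$ is immediate: $\varphi_0$ is decreasing, so $M_0=\varphi_0(0)=1/\gamma$. For $k=1$, differentiating gives critical-point equation $t^2+2t-\gamma=0$, whose positive root is $t_\star = \sqrt{1+\gamma}-1$; substituting back yields $1+t_\star=\sqrt{1+\gamma}$ and $\gamma+t_\star^2 = 2\sqrt{1+\gamma}\bigl(\sqrt{1+\gamma}-1\bigr)$, which produces $M_1=1/\bigl(2(\sqrt{1+\gamma}-1)\bigr)$ as required. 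For $k=2$, the derivative factors cleanly as $\varphi_2'(t)=2(1+t)(\gamma-t)/(\gamma+t^2)^2$, so the maximum is at $t_\star=\gamma$, giving $\varphi_2(\gamma) = (1+\gamma)^2/(\gamma+\gamma^2) = 1 + 1/\gamma$.

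There is essentially no obstacle here: once one passes to Fourier variables, everything reduces to a pointwise multiplier estimate, and the three bounds are just extrema of explicit rational functions of one real variable. The only thing to double-check is the sharpness/form of each bound (in particular the $k=2$ case, where one might be tempted to use the cruder inequality $(1+t)^2\leq 2(1+t^2)$; this would yield only $2(\gamma+1)/\gamma$, so one really does need the exact maximization to recover the stated constant $1+1/\gamma$).
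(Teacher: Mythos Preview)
Your proposal is correct and follows the same route as the paper: pass to the Fourier side, reduce to the pointwise multiplier bound $\sup_{\xi}\frac{(1+|\xi|)^k}{\gamma+|\xi|^2}$, and read off the three constants. The only difference is that the paper asserts the two nontrivial inequalities as ``simple calculation,'' whereas you actually carry out the one-variable optimization; your computations for $k=1,2$ are correct.
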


\begin{proof}
    One can easily compute that 
    \begin{align}
        \Vert (\gamma I - \Delta)^{-1} f \Vert_{\mathcal{B}^{s}(\mathbb{R}^d)} & = \int_{\mathbb{R}^d} \dfrac{1 }{\gamma + |\xi|^2} |\hat{f}(\xi)| \left(1 + |\xi|\right)^{s} \dd \xi \nonumber \\
        & \leq \gamma^{-1} \int_{\mathbb{R}^d} |\hat{f}(\xi)| \left(1 + |\xi|\right)^{s} \dd \xi = \gamma^{-1} \Vert f \Vert_{\mathcal{B}^{s}(\mathbb{R}^d)}. \nonumber
    \end{align}
    Simple calculation shows that for all $\xi \in \mathbb{R}^d$, 
    \begin{align}
        \dfrac{1+|\xi|}{\gamma + |\xi|^2} \leq \dfrac{1 }{2(\sqrt{1+\gamma} -1)}, \hspace{1em} \dfrac{(1+|\xi|)^2}{\gamma + |\xi|^2} \leq 1 + \dfrac{1}{\gamma}. \nonumber
    \end{align} 
    Therefore,  
    \begin{align}
        \Vert (\gamma I - \Delta)^{-1} f \Vert_{\mathcal{B}^{s+1}(\mathbb{R}^d)} & = \int_{\mathbb{R}^d} \dfrac{1 + |\xi|}{\gamma + |\xi|^2} |\hat{f}(\xi)| \left(1 + |\xi|\right)^{s} \dd \xi \nonumber \\
        & \leq \dfrac{1 }{2(\sqrt{1+\gamma} -1)}\int_{\mathbb{R}^d} |\hat{f}(\xi)| \left(1 + |\xi|\right)^{s} \dd \xi = \dfrac{1 }{2(\sqrt{1+\gamma} -1)} \Vert f \Vert_{\mathcal{B}^{s}(\mathbb{R}^d)}, \nonumber
    \end{align}
    and
    \begin{align}
        \Vert (\gamma I - \Delta)^{-1} f \Vert_{\mathcal{B}^{s+2}(\mathbb{R}^d)} & = \int_{\mathbb{R}^d} \dfrac{(1 + |\xi|)^2}{\gamma + |\xi|^2} |\hat{f}(\xi)| \left(1 + |\xi|\right)^{s} \dd \xi \nonumber \\
        & \leq \left(1+ \dfrac{1}{\gamma}\right) \int_{\mathbb{R}^d} |\hat{f}(\xi)| \left(1 + |\xi|\right)^{s} \dd \xi = \left(1+ \dfrac{1}{\gamma}\right) \Vert f \Vert_{\mathcal{B}^{s}(\mathbb{R}^d)}. \nonumber \qedhere
    \end{align}
\end{proof}

\subsection{Control formulation of HJB equations}

In this subsection, we reformulate the HJB equation (\ref{eq:HJB_intro}) into an optimal control problem. We assume that the coefficients $f : \mathbb{R}^d \to \mathbb{R}^d$ and $g: \mathbb{R}^d \to \mathbb{R}^{d \times m}$ are Lipschitz continuous and globally bounded on $\mathbb{R}^d$. We make the above regularity and boundedness assumptions on $f$ and $g$ here for the purpose of introducing the optimal control problem. In proving our main results, assuming $f$, $g$ are spectral Barron functions of order $s \geq 1$ automatically satisfies these conditions. 

Let $(\Omega, \mathcal{F}, \{\mathcal{F}_t\}_{t \geq 0}, \mathbb{P})$ be a filtered probability space such that $(\Omega, \mathcal{F}, \mathbb{P})$ is complete, $\mathcal{F}_0$ contains all the $\mathbb{P}$-null sets in $\mathcal{F}$, and $\{ \mathcal{F}_t\}_{t \geq 0}$ is right continuous. Let $W_t$ be an $n$-dimensional standard Brownian motion defined on $(\Omega, \mathcal{F}, \{\mathcal{F}_t\}_{t \geq 0}, \mathbb{P})$. Consider the following stochastic differential equation (SDE)
\begin{align}\label{eq:state_dyn}
    dX_t = \left( f(X_t) + g(X_t) u_t \right) dt + \sqrt{2} dW_t, \hspace{1em} X_0 = x \in \mathbb{R}^d, 
\end{align}
where $u(\cdot): [0,\infty) \times \Omega \to \mathbb{R}^m$ is $\{\mathcal{F}_t\}_{t \geq 0}$-progressively measurable. We then consider an infinite horizon optimal control problem to minimize the following cost functional: 
\begin{align}
   J(x, u(\cdot)) := \mathbb{E}\left[ \int_{0}^{\infty} e^{-\gamma t} \left(\ell(X_t) + \Vert u_t \Vert_R^2 \right) dt \mid X_0 = x\right]. 
\end{align}
The control $u(\cdot)$ is chosen over the set 
\begin{align}
    \mathcal{U}:= \{u: [0, \infty) \times \Omega \to \mathbb{R}^m \mid u(\cdot) \text{ is } \{\mathcal{F}_t\}_{t \geq 0}\text{-progressively measurable}\}. \nonumber
\end{align}
The optimal value function is defined as 
\begin{align}\label{eq:opt_ctrl_prb}
    V(x) := \min_{u(\cdot) \in \mathcal{U}} J(x, u(\cdot)), \hspace{1em}  x \in \mathbb{R}^d. 
\end{align}
According to standard stochastic control theory \cite{yong2012stochastic}, if the optimal value function $V$ exists and $V \in C^2(\mathbb{R}^d)$, then $V$ satisfies the HJB equation (\ref{eq:HJB_qd}). 
The optimal feedback control $u^*$ is given by 
\begin{align}
    u^*(x) & = \mathrm{argmin}_{u \in \mathbb{R}^m}\left\{ \mathcal{L}_u V(x) + \ell(x) + \Vert u \Vert_R^2 \right\} = - \dfrac{1 }{2 } R^{-1} g(x)^T \nabla V(x), \hspace{1em} x \in \mathbb{R}^d. \nonumber
\end{align}
Therefore, an equivalent form of the HJB equation (\ref{eq:HJB_qd}) is 
\begin{align}\label{eq:HJB_equiv}
    - \gamma V + \nabla V \cdot \left( f+g \left( - \dfrac{1}{2} R^{-1} g^T \nabla V \right)\right) + \Delta V + l(x) + \left\lVert - \dfrac{1}{2} R^{-1} g^T \nabla V \right\rVert_R^2 = 0. 
\end{align}

\begin{defn}\label{def:adm_fd}
    A function $u: \mathbb{R}^d \to \mathbb{R}^m$ is called an admissible feedback control if (i) $u$ is continuous and (ii) $\forall x \in \mathbb{R}^d$, there exists a unique (strong) solution $X_t^u$  to the SDE 
    \begin{align}
        dX_t = \left( f(X_t) + g(X_t) u(X_t) \right) dt + \sqrt{2} dW_t, \hspace{1em} X_0 = x. \nonumber
    \end{align}
    such that $\mathbb{E} \left[\int_{0 }^{\infty } e^{-\gamma t} \left(\ell(X_t^u) + \Vert u(X_t^u) \Vert_R^2 \right)\dd t \right] < \infty$. 
\end{defn}

The control formulation helps us represent the solution of the linearization of the HJB equation (\ref{eq:HJB_qd}) in terms of probabilistic expectation that involves the state trajectory given by (\ref{eq:state_dyn}). As we will see in the next subsection, this representation will allow us to establish the monotonicity of the sequence that we construct to approximate the solution of (\ref{eq:HJB_qd}), which is crucial to the convergence of the approximating sequence. The definition of admissibility specifies the set of feedback controls under which such a desired probabilistic representation is possible.

\subsection{Policy iteration}\label{sec:conv_ply_itr}

In this subsection, we use policy iteration to construct sequences of spectral Barron functions to approximate the solution to the HJB equation (\ref{eq:HJB_qd}). The policy iteration is an algorithm to find the optimal solution by iteratively solving the linearized HJB equation and improving the controls; see Algorithm \ref{alg:HJB_2nd_ord}. 

\begin{algorithm}[h]
    \caption{Policy iteration for the elliptic HJB equation (\ref{eq:HJB_qd})}\label{alg:HJB_2nd_ord}
    \begin{algorithmic}
    \State Choose an initial admissible control $u^{(0)}: \mathbb{R}^d \to \mathbb{R}^m$. 
    \State For each $i \in \mathbb{N}$, if there exists $V^{(i)} \in C^2(\mathbb{R}^d)$ that solves the equation
    \begin{align}\label{alg:1_solve_ghjb_2nd}
        -\gamma V^{(i)}(x) + \mathcal{L}_{u^{(i)}} V^{(i)}(x) + \ell(x) + \Vert u^{(i)}(x) \Vert_R^2 = 0,  \hspace{1em} x\in \mathbb{R}^d,
    \end{align}
    update the control by 
    \begin{align}\label{alg:2_update_ctrl_2nd}
        u^{(i+1)}(x) := - \dfrac{1 }{2} R^{-1} g(x)^{T} \nabla V^{(i)}(x), \hspace{1em} x \in \mathbb{R}^d.
    \end{align}
    \end{algorithmic}
\end{algorithm}

We remark that in presenting the algorithm, there is no guarantee that a classical solution to the linearized equation (\ref{alg:1_solve_ghjb_2nd}) always exists. We settle this problem by assuming the coefficient functions $f$, $g$, $\ell$ lie in spectral Barron spaces and address the well-posedness of the linearized HJB equation by showing that if the control is a spectral Barron function, then there exists a unique solution to the linearized equation (\ref{alg:1_solve_ghjb_2nd}) in the spectral Barron space, and by the regularity of spectral Barron functions in Proposition \ref{prop:nn_apprx_spec_barron}, there exists a unique classical solution to (\ref{alg:1_solve_ghjb_2nd}). 

We first prove a regularity result of linear second-order elliptic PDEs in spectral Barron spaces.  

\begin{theorem}\label{thm:reg}
    For any $s \geq 1$, $\gamma > 0$, $\mu: \mathbb{R}^d \to \mathbb{R}^d \in \mathcal{B}^{s}(\mathbb{R}^d)$ and $v_f \in \mathcal{B}^{s}(\mathbb{R}^d)$, there exists a unique solution $v^* \in \mathcal{B}^{s+1}(\mathbb{R}^d)$ to the equation
    \begin{align}\label{eq:elp_pde}
        \gamma v - \Delta v + \mu \cdot \nabla v = v_f \text{ in } \mathbb{R}^d.
    \end{align}
    Further, $v^*$ satisfies
    \begin{align}
        \Vert v^* \Vert_{\mathcal{B}^{s+1}(\mathbb{R}^d)} \leq C \Vert v_f\Vert_{\mathcal{B}^s(\mathbb{R}^d)}, \nonumber
    \end{align}
    where $C$ is a constant depending only on $d$, $\gamma$ and $\mu$. 
\end{theorem}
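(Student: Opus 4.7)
The plan is to recast (\ref{eq:elp_pde}) as a fixed-point equation on the Banach space $\mathcal{B}^{s+1}(\mathbb{R}^d)$ and apply the Banach contraction principle, with the required mapping properties supplied by Propositions \ref{prop:barron_alg} and \ref{prop:bdd_gamma_delta}. Inverting the principal part $\gamma I - \Delta$ formally gives
\begin{align*}
    v = T v := (\gamma I - \Delta)^{-1}\bigl(v_f - \mu \cdot \nabla v\bigr),
\end{align*}
so my first task is to show $T$ is a self-map of $\mathcal{B}^{s+1}(\mathbb{R}^d)$. This chains three ingredients: Proposition \ref{prop:barron_alg}(2) gives $\partial_i : \mathcal{B}^{s+1} \to \mathcal{B}^s$ boundedly; Proposition \ref{prop:barron_alg}(1), summed over the $d$ coordinates, gives the product bound $\|\mu \cdot \nabla v\|_{\mathcal{B}^s} \leq \|\mu\|_{\mathcal{B}^s}\|v\|_{\mathcal{B}^{s+1}}$; and the second estimate of Proposition \ref{prop:bdd_gamma_delta} gives $(\gamma I - \Delta)^{-1} : \mathcal{B}^s \to \mathcal{B}^{s+1}$ with operator norm at most $\bigl(2(\sqrt{1+\gamma}-1)\bigr)^{-1}$.

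Composing these three bounds by linearity yields
\begin{align*}
    \|T v_1 - T v_2\|_{\mathcal{B}^{s+1}(\mathbb{R}^d)} \leq \frac{\|\mu\|_{\mathcal{B}^s(\mathbb{R}^d)}}{2(\sqrt{1+\gamma}-1)} \|v_1 - v_2\|_{\mathcal{B}^{s+1}(\mathbb{R}^d)},
\end{align*}
so once $q := \|\mu\|_{\mathcal{B}^s}/(2(\sqrt{1+\gamma}-1)) < 1$, $T$ is a strict contraction and the Banach fixed-point theorem produces a unique $v^* \in \mathcal{B}^{s+1}$ with $T v^* = v^*$. Combining the classical fixed-point bound $\|v^*\|_{\mathcal{B}^{s+1}} \leq (1-q)^{-1}\|T 0\|_{\mathcal{B}^{s+1}}$ with $\|T 0\|_{\mathcal{B}^{s+1}} \leq \|v_f\|_{\mathcal{B}^s}/(2(\sqrt{1+\gamma}-1))$ delivers the quantitative estimate $\|v^*\|_{\mathcal{B}^{s+1}} \leq C \|v_f\|_{\mathcal{B}^s}$ with a constant $C$ depending only on $d$, $\gamma$, and $\mu$. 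To close the argument I would note that because $v^* \in \mathcal{B}^{s+1}$ satisfies the identity $v^* = T v^*$ in $\mathcal{B}^{s+1}$, applying $\gamma I - \Delta$ recovers the PDE (\ref{eq:elp_pde}) in the Fourier-multiplier sense, and Proposition \ref{prop:nn_apprx_spec_barron} upgrades this to a classical equation once $s \geq 1$ (for $s = 0$ it holds at least distributionally, which is already enough for the subsequent bootstrap).

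The main obstacle is that the contraction argument as stated only closes once $\gamma$ is large enough relative to $\|\mu\|_{\mathcal{B}^s}$, specifically $\gamma > \|\mu\|_{\mathcal{B}^s} + \|\mu\|_{\mathcal{B}^s}^2/4$. In the intended application inside Algorithm \ref{alg:HJB_2nd_ord} this is benign because Theorem \ref{thm:main_uni_conv} already imposes a lower bound on $\gamma$ in terms of the Barron norms of the coefficients; reading Theorem \ref{thm:reg} verbatim for every $\gamma > 0$ would demand a more delicate route, e.g.\ first obtaining a weak solution in $H^1(\mathbb{R}^d)$ via Lax--Milgram combined with a Fredholm alternative, and then upgrading its regularity to $\mathcal{B}^{s+1}$ by bootstrapping the fixed-point identity with the Fourier-multiplier estimates of Proposition \ref{prop:bdd_gamma_delta}.
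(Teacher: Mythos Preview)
Your contraction argument is correct and clean, but as you yourself note, it only closes in the regime $2(\sqrt{1+\gamma}-1) > \|\mu\|_{\mathcal{B}^s}$, whereas Theorem~\ref{thm:reg} is stated for \emph{every} $\gamma>0$. So as a proof of the theorem as written there is a genuine gap. You are right that the large-$\gamma$ case is the one actually exploited downstream (indeed the proof of Proposition~\ref{prop:bdd_iterates} essentially repeats your Neumann-series estimate to obtain the explicit constants), but Lemma~\ref{lmm:barron_adm} and Proposition~\ref{prop:ply_iter_spec_barron} invoke the theorem before any size restriction on $\gamma$ is imposed.

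The paper closes the gap by a route different from your suggested $H^1$ Lax--Milgram-plus-bootstrap: it applies the Fredholm alternative \emph{directly in the Barron scale}. First, $\mathcal{T}=(\gamma I-\Delta)^{-1}\mu\cdot\nabla$ is shown to be compact on $\mathcal{B}^{s+1}(\mathbb{R}^d)$ by checking the Kolmogorov--Riesz criterion for the family $\{\widehat{\mathcal{T}v}(\xi)(1+|\xi|)^{s+1}\}$ in $L^1$; the key tightness comes from $(1+|\xi|)/(\gamma+|\xi|^2)\to 0$ as $|\xi|\to\infty$. Second, $I+\mathcal{T}$ is shown to be injective: a nontrivial Barron solution of $\gamma v-\Delta v+\mu\cdot\nabla v=0$ would, by Riemann--Lebesgue, vanish at infinity and hence attain a strict interior extremum on some large ball, contradicting the weak maximum principle. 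Fredholm then gives a bounded inverse $(I+\mathcal{T})^{-1}$ on $\mathcal{B}^{s+1}$ for every $\gamma>0$, and composing with the estimate $\|(\gamma I-\Delta)^{-1}v_f\|_{\mathcal{B}^{s+2}}\leq(1+\gamma^{-1})\|v_f\|_{\mathcal{B}^s}$ from Proposition~\ref{prop:bdd_gamma_delta} yields the stated bound. Your $H^1$ fallback would face the nontrivial problem of lifting a Sobolev solution back into the Barron class; the paper sidesteps this entirely by never leaving $\mathcal{B}^{s+1}$.
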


The proof of Theorem \ref{thm:reg} is given in Appendix \ref{app:reg_elliptic}. A similar regularity theorem of the static Schr\"odinger equation is given in \cite{chen2023regularity}. The main difference between the elliptic PDE (\ref{eq:elp_pde}) discussed here and the one in \cite{chen2023regularity} is that the equation (\ref{eq:elp_pde}) includes a drift term $\mu: \mathbb{R}^d \to \mathbb{R}^d$. Without the drift term $\mu$, one can obtain $\Vert v^* \Vert_{\mathcal{B}^{s+2}(\mathbb{R}^d)} \leq C \Vert v_f \Vert_{\mathcal{B}^{s}(\mathbb{R}^d)}$ following the proof of \cite{chen2023regularity}. However, the difference is insignificant to our problem since, roughly speaking, the source term shares the same order of the control function $u$ while the solution corresponds to the value function $V$, and it is natural from (\ref{alg:2_update_ctrl_2nd}) that they differ in order by 1 in terms of spectral Barron norms. In Theorem \ref{thm:reg}, we specify $s \geq 1$ to ensure that the solution to (\ref{eq:elp_pde}) is classical. However, when considering $s \geq 0$, under the same assumptions, one can prove in exactly the same way that there exists a unique solution $v^* \in \mathcal{B}^{s+1}(\mathbb{R}^d)$ to the equation $(I + \mathcal{T}) v = (\gamma I - \Delta)^{-1} v_f$ in $\mathbb{R}^d$, where $\mathcal{T} = (\gamma I - \Delta)^{-1} \mu \cdot \nabla$. In this case, the Laplacian of $v$ does not necessarily exist in the classical sense. 

We present the following two lemmas, which are evident from the algebras of spectral Barron spaces. The proofs can be found in Appendix \ref{app:proofs_conv}. 

\begin{lemma}\label{lmm:constant_c_R1}
    Let $s \geq 0$. For any control $u \in \mathcal{B}^{s}(\mathbb{R}^d)$, there exists a constant $C_{R,1} > 0$ such that 
\begin{align}
    \Vert \langle u, u\rangle_R \Vert_{\mathcal{B}^{s}(\mathbb{R}^d)} \leq C_{R,1} \Vert u \Vert_{\mathcal{B}^{s}(\mathbb{R}^d)}^2. \nonumber
\end{align}
\end{lemma}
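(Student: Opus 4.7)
The plan is a direct calculation using the product estimate from Proposition \ref{prop:barron_alg}(1) together with the convention that $\Vert u \Vert_{\mathcal{B}^{s}(\mathbb{R}^d)} = \sum_{i=1}^m \Vert u_i \Vert_{\mathcal{B}^{s}(\mathbb{R}^d)}$ for vector-valued functions. The key observation is that $\langle u, u\rangle_R = u^T R u$ is a finite $\mathbb{R}$-linear combination of pointwise products of components of $u$, and spectral Barron spaces are closed under both products and finite linear combinations.

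First, I would expand
\begin{equation*}
    \langle u, u \rangle_R(x) = \sum_{i,j=1}^m R_{ij}\, u_i(x)\, u_j(x).
\end{equation*}
By the triangle inequality for $\Vert \cdot \Vert_{\mathcal{B}^{s}(\mathbb{R}^d)}$ and Proposition \ref{prop:barron_alg}(1), for each pair $(i,j)$ one has $u_i u_j \in \mathcal{B}^{s}(\mathbb{R}^d)$ with $\Vert u_i u_j \Vert_{\mathcal{B}^{s}(\mathbb{R}^d)} \leq \Vert u_i \Vert_{\mathcal{B}^{s}(\mathbb{R}^d)} \Vert u_j \Vert_{\mathcal{B}^{s}(\mathbb{R}^d)}$. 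Combining these yields
\begin{equation*}
    \Vert \langle u, u \rangle_R \Vert_{\mathcal{B}^{s}(\mathbb{R}^d)} \leq \sum_{i,j=1}^m |R_{ij}|\, \Vert u_i \Vert_{\mathcal{B}^{s}(\mathbb{R}^d)} \Vert u_j \Vert_{\mathcal{B}^{s}(\mathbb{R}^d)} \leq \Bigl(\max_{i,j} |R_{ij}|\Bigr) \Bigl(\sum_{i=1}^m \Vert u_i \Vert_{\mathcal{B}^{s}(\mathbb{R}^d)}\Bigr)^{\!2}.
\end{equation*}
The last factor equals $\Vert u \Vert_{\mathcal{B}^{s}(\mathbb{R}^d)}^2$ by the vector-valued convention, so taking $C_{R,1} := \max_{i,j} |R_{ij}|$ (or equivalently any constant controlling the entrywise maximum of $R$) finishes the proof.

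There is no real obstacle here; the estimate is a routine bookkeeping exercise and the only thing to check is that the chosen $C_{R,1}$ depends solely on $R$, which is evident. I would include the short computation verbatim in the appendix together with the analogous proof of the bilinear version $\Vert \langle u, v\rangle_R\Vert_{\mathcal{B}^{s}(\mathbb{R}^d)} \leq C_{R,1}\Vert u \Vert_{\mathcal{B}^{s}(\mathbb{R}^d)} \Vert v \Vert_{\mathcal{B}^{s}(\mathbb{R}^d)}$ if it is needed later in the policy-iteration argument.
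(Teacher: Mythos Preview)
Your proof is correct and follows essentially the same approach as the paper: expand $\langle u,u\rangle_R$ as $\sum_{i,j} R_{ij} u_i u_j$, apply the triangle inequality and the product estimate of Proposition~\ref{prop:barron_alg}(1), and take $C_{R,1}=\max_{i,j}|R_{ij}|$. Your use of $|R_{ij}|$ rather than $R_{ij}$ is in fact slightly more careful, since off-diagonal entries of a positive definite matrix need not be nonnegative.
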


\begin{lemma}\label{lmm:constant_c_R2}
    Let $s \geq 0$ and $g \in \mathcal{B}^{s}(\mathbb{R}^d)$. For any value function $V \in \mathcal{B}^{s+1}(\mathbb{R}^d)$, define 
\begin{align}
    u := - \dfrac{1 }{2} R^{-1} g^T \nabla V, \nonumber
\end{align}
then there exists $C_{R,2} > 0$ such that 
\begin{align}
    \Vert u \Vert_{\mathcal{B}^s(\mathbb{R}^d)} \leq C_{R,2} \Vert g \Vert_{\mathcal{B}^s(\mathbb{R}^d)} \Vert V \Vert_{\mathcal{B}^{s+1}(\mathbb{R}^d)}. \nonumber
\end{align}
\end{lemma}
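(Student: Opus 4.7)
The plan is to unpack the definition of $u$ entry-by-entry and then apply the algebra and differentiation properties of spectral Barron spaces stated in Proposition \ref{prop:barron_alg}, treating $R^{-1}$ as a matrix of real constants that contributes only a scalar factor depending on $R$.

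First I would write the $i$-th component of $u$ as
\begin{align*}
    u_i = -\tfrac{1}{2} \sum_{j=1}^{m} \sum_{k=1}^{d} (R^{-1})_{ij}\, g_{kj}\, \partial_k V,
\end{align*}
so that each $u_i$ is a finite sum of constants times products of scalar spectral Barron functions. Next, I would apply the submultiplicative product bound in Proposition \ref{prop:barron_alg}(1) together with the differentiation bound $\lVert \partial_k V \rVert_{\mathcal{B}^{s}(\mathbb{R}^d)} \leq \lVert V \rVert_{\mathcal{B}^{s+1}(\mathbb{R}^d)}$ from Proposition \ref{prop:barron_alg}(2) to get, for each $i$,
\begin{align*}
    \lVert u_i \rVert_{\mathcal{B}^{s}(\mathbb{R}^d)}
    \leq \tfrac{1}{2} \sum_{j,k} |(R^{-1})_{ij}|\, \lVert g_{kj} \rVert_{\mathcal{B}^{s}(\mathbb{R}^d)}\, \lVert V \rVert_{\mathcal{B}^{s+1}(\mathbb{R}^d)}.
\end{align*}
Finally, summing over $i$ and using the convention that $\lVert g \rVert_{\mathcal{B}^s(\mathbb{R}^d)} = \sum_{k,j} \lVert g_{kj} \rVert_{\mathcal{B}^s(\mathbb{R}^d)}$, I would bound the double sum by $\bigl(\tfrac{1}{2} \max_{j}\sum_{i}|(R^{-1})_{ij}|\bigr)\, \lVert g \rVert_{\mathcal{B}^s(\mathbb{R}^d)}\, \lVert V \rVert_{\mathcal{B}^{s+1}(\mathbb{R}^d)}$, and define $C_{R,2}$ to be the constant in parentheses (or any comparable entrywise norm of $R^{-1}$).

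There is no real obstacle here: the estimate is a direct bookkeeping exercise once the product and derivative bounds from Proposition \ref{prop:barron_alg} are in hand, and the only care needed is to make sure the matrix/vector Barron-norm convention is applied consistently so that the final constant depends only on $R$ (not on $g$ or $V$).
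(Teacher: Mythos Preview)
Your proposal is correct and follows essentially the same approach as the paper's proof: expand $u$ componentwise, apply the product and derivative bounds of Proposition~\ref{prop:barron_alg}, and collect the $R^{-1}$ entries into a constant. The only cosmetic difference is in how the constant is packaged---the paper takes $C_{R,2}=\tfrac{1}{2}\sum_{i}\max_{j}|(R^{-1})_{ij}|$ whereas you take $\tfrac{1}{2}\max_{j}\sum_{i}|(R^{-1})_{ij}|$---but both are valid (yours is in fact slightly sharper).
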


The following lemma shows that any spectral Barron control $u$ is admissible and the linearized HJB equation 
\begin{align}\label{eq:lin_HJB}
        -\gamma V_u(x) + \mathcal{L}_{u} V_u(x) + \ell(x) + \Vert u(x) \Vert_R^2 = 0, \hspace{1em} x \in \mathbb{R}^d,
    \end{align}
admits a unique spectral Barron solution $V_u$.

\begin{lemma}\label{lmm:barron_adm}
    Suppose that Assumption \ref{asp:coef} holds for $s \geq 1$ and $u :\mathbb{R}^d \to \mathbb{R}^m \in \mathcal{B}^{s}(\mathbb{R}^d)$, then $u$ is an admissible feedback control and there exists a unique solution $V_u \in \mathcal{B}^{s+1}(\mathbb{R}^d)$ to the linearized HJB equation (\ref{eq:lin_HJB}). 
\end{lemma}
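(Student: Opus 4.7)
The plan is to verify the two assertions---admissibility and existence/uniqueness of $V_u$---separately, both by direct assembly of the preparatory results in this section.

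For admissibility, I would first observe that Assumption~\ref{asp:coef}, combined with the embedding $\mathcal{B}^s(\mathbb{R}^d) \hookrightarrow \mathcal{B}^1(\mathbb{R}^d)$ from Proposition~\ref{prop:barron_embed}(2) (valid since $s \ge 1$) and the regularity statement of Proposition~\ref{prop:nn_apprx_spec_barron} with $k = 1$, gives that $f$, $g$, $\ell$, and $u$ all belong to $C^1(\mathbb{R}^d)$; in particular, continuity of $u$ (requirement (i) of Definition~\ref{def:adm_fd}) holds. All four functions are bounded on $\mathbb{R}^d$ by Proposition~\ref{prop:barron_embed}(1), and their first-order partials lie in $\mathcal{B}^{s-1}(\mathbb{R}^d) \hookrightarrow L^\infty(\mathbb{R}^d)$ by Propositions~\ref{prop:barron_alg}(2) and~\ref{prop:barron_embed}(1). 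Hence the drift $f + gu$ is bounded with bounded first derivatives, and in particular globally Lipschitz. Standard well-posedness results for SDEs with Lipschitz, linearly growing coefficients then yield a unique strong solution $X_t^u$ to the controlled SDE for every initial condition $x \in \mathbb{R}^d$. Since $\ell(X_t^u)$ and $\Vert u(X_t^u) \Vert_R^2$ are uniformly bounded in $(t,\omega)$ and $\gamma > 0$, the expected discounted cost integral is finite, establishing admissibility.

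For existence and uniqueness of $V_u \in \mathcal{B}^{s+1}(\mathbb{R}^d)$, I would recast (\ref{eq:lin_HJB}) in the form required by Theorem~\ref{thm:reg}, namely
\begin{align*}
    \gamma V_u - \Delta V_u + \mu \cdot \nabla V_u = v_f, \qquad \mu := -(f + gu), \quad v_f := \ell + \Vert u \Vert_R^2.
\end{align*}
Verifying the hypotheses of Theorem~\ref{thm:reg} is then routine: $\mu \in \mathcal{B}^s(\mathbb{R}^d)$ because $f \in \mathcal{B}^s(\mathbb{R}^d)$ by assumption and each component of $gu$ lies in $\mathcal{B}^s(\mathbb{R}^d)$ by the algebra property of Proposition~\ref{prop:barron_alg}(1), while $v_f \in \mathcal{B}^s(\mathbb{R}^d)$ because $\ell \in \mathcal{B}^s(\mathbb{R}^d)$ by assumption and $\langle u, u \rangle_R \in \mathcal{B}^s(\mathbb{R}^d)$ by Lemma~\ref{lmm:constant_c_R1}. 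Theorem~\ref{thm:reg} then produces a unique solution $V_u \in \mathcal{B}^{s+1}(\mathbb{R}^d)$ of (\ref{eq:lin_HJB}).

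I do not anticipate any serious obstacle, as the proof is essentially an assembly of the preparatory results. The hypothesis $s \ge 1$ enters at precisely the two places where it is needed: first, to upgrade $u$, $f$, $g$ to $C^1$ regularity with bounded gradients so that SDE well-posedness applies in the strong sense required by Definition~\ref{def:adm_fd}; second, to ensure that the drift coefficient $\mu = -(f+gu)$ lies in the spectral Barron space $\mathcal{B}^s(\mathbb{R}^d)$ demanded by Theorem~\ref{thm:reg}.
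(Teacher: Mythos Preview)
Your proposal is correct. The existence/uniqueness argument for $V_u$ matches the paper's exactly: check that $\mu=-(f+gu)$ and $v_f=\ell+\lVert u\rVert_R^2$ lie in $\mathcal{B}^s(\mathbb{R}^d)$ and invoke Theorem~\ref{thm:reg}. Your admissibility argument also mirrors the paper's for the SDE well-posedness step ($C^1$ regularity, bounded gradients, Lipschitz drift). The one genuine difference is in how you verify finiteness of the expected discounted cost: you bound the integrand pointwise by $\lVert\ell\rVert_{L^\infty}+C\lVert u\rVert_{L^\infty}^2$ and integrate $e^{-\gamma t}$ directly, whereas the paper first constructs $V_u$, applies It\^o's formula to $e^{-\gamma t}V_u(X_t^u)$, and shows that the expected cost actually \emph{equals} $V_u(x)$, hence is finite. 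Your route is shorter and self-contained for the lemma as stated; the paper's route does extra work but delivers the probabilistic representation $V_u(x)=\mathbb{E}\bigl[\int_0^\infty e^{-\gamma t}(\ell+\lVert u\rVert_R^2)(X_t^u)\,\dd t\bigr]$, which anticipates the It\^o computations used later in proving monotonicity of $\{V^{(i)}\}$ (Proposition~\ref{prop:ply_iter_spec_barron}). One minor quibble: in your closing remark, the second ``place where $s\ge 1$ is needed'' is not quite accurate, since Theorem~\ref{thm:reg} only requires $s\ge 0$; the constraint $s\ge 1$ is needed solely for the Lipschitz/SDE step.
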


\begin{proof}
    By Assumption \ref{asp:coef}, Proposition \ref{prop:barron_alg} (1) and Lemma \ref{lmm:constant_c_R1}, we have $f +g u \in \mathcal{B}^{s}(\mathbb{R}^d)$ and $\ell + \Vert u \Vert_R^2 \in \mathcal{B}^{s}(\mathbb{R}^d)$. Theorem \ref{thm:reg} implies that there exists a unique solution $V_u \in \mathcal{B}^{s+1}(\mathbb{R}^d)$ to the equation (\ref{eq:lin_HJB}). By Proposition \ref{prop:nn_apprx_spec_barron}, $V_u \in C^2(\mathbb{R}^d)$, thus $V_u$ is a classical solution of (\ref{eq:lin_HJB}).

    We now show that $u$ is admissible. Since $f, g, u \in \mathcal{B}^{s}(\mathbb{R}^d)$ for $s \geq 1$, by Propositions \ref{prop:nn_apprx_spec_barron} and \ref{prop:barron_embed}, $f, g, u \in C^1(\mathbb{R}^d)$ and are globally bounded. Moreover, $f, g, u$ are Lipschitz continuous on $\mathbb{R}^d$ as their gradients lie in $\mathcal{B}^{s}(\mathbb{R}^d)$. By standard SDE theory (for instance, Theorem 5.4 of \cite{klebaner2012introduction}), there exists a unique (strong) solution $X_t^{u}$ of the SDE
    \begin{align}
        dX_t^{u} = \left( f\left(X_t^{u}\right) + g\left(X_t^{u} \right) u\left(X_t^{u} \right) \right) dt + \sqrt{2} dW_t, \hspace{1em} X^u_0 = x. \nonumber
    \end{align}
    By It\^o's formula and the equation (\ref{eq:lin_HJB}), 
    \begin{align}
        & \quad d \left( e^{-\gamma t} V_u\left(X^u_t\right)\right) \nonumber \\
        & =  e^{-\gamma t} \left( -\gamma V_u + \mathcal{L}_{u} V_u \right) \left(X_t^u \right) dt + \sqrt{2} e^{-\gamma t} \nabla V_u \left(X_t^u \right) dW_t \nonumber \\
        & =  e^{-\gamma t} \left( -\ell - \Vert u \Vert_R^2 \right)\left(X_t^u \right)dt + \sqrt{2} e^{-\gamma t} \nabla V_u \left(X_t^u \right) dW_t. \nonumber
    \end{align}
    Integrate both sides from $0$ to $T$, we get 
    \begin{align}
        & e^{-\gamma T} V_u\left(X_t^u \right) - V_u\left(X^u_0 \right) \nonumber \\
        = & \int_{0 }^{T} e^{-\gamma t}  \left( -\ell - \Vert u \Vert_R^2 \right)\left(X_t^u \right)dt + \sqrt{2} \int_{0 }^{T} e^{-\gamma t} \nabla V_u \left(X_t^u \right) dW_t. \nonumber
    \end{align}
    Taking expectation with limit $T \to \infty$, we obtain
    \begin{align}
        \mathbb{E} \left[\int_{0 }^{\infty} e^{-\gamma t}  \left( \ell + \Vert u \Vert_R^2 \right)\left(X_t^u \right)dt \right] = V_u(x) - \mathbb{E} \left[ \lim_{T \to \infty} e^{-\gamma T} V_u\left(X_t^u \right) \right]. 
    \end{align}
    Since $V_u \in \mathcal{B}^{s+1}(\mathbb{R}^d)$, $V_u$ is bounded on $\mathbb{R}^d$. Hence
    \begin{align}
        \mathbb{E} \left[ \lim_{T \to \infty} e^{-\gamma T} V_u\left(X_t^u \right) \right] = 0. \nonumber
    \end{align}
    Thus 
    \begin{align}
        \mathbb{E} \left[\int_{0 }^{\infty} e^{-\gamma t}  \left( \ell + \Vert u \Vert_R^2 \right)\left(X_t^u \right)dt \right] = V_u(x) < \infty. \nonumber
    \end{align}
    Therefore, $u$ is admissible. 
\end{proof}

Now we construct sequences of controls and value functions in spectral Barron spaces through the policy iteration algorithm. The following proposition also shows that the resulting sequence of value functions is pointwise monotone.

\begin{prop}\label{prop:ply_iter_spec_barron}
    Suppose that Assumption \ref{asp:coef} holds for $s \geq 1$ and the initial control $u^{(0)} \in \mathcal{B}^{s}(\mathbb{R}^d)$. Then the policy iteration (Alg.~\ref{alg:HJB_2nd_ord}) gives a sequence of admissible controls $\{u^{(i)}\}$ and a sequence of value functions $\{V^{(i)}\}$ such that for any $i \in \mathbb{N}$, $u^{(i)} \in \mathcal{B}^{s}(\mathbb{R}^d)$ and $V^{(i)} \in \mathcal{B}^{s+1}(\mathbb{R}^d)$. Moreover, $\{V^{(i)}\}$ is pointwise decreasing. 
\end{prop}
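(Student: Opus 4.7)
\medskip

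\textbf{Proof proposal.} My plan has two stages: an induction that keeps $u^{(i)}$ and $V^{(i)}$ in the correct spectral Barron spaces, followed by a Feynman--Kac comparison argument for the pointwise monotonicity.

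\emph{Induction on $i$.} The base case $u^{(0)} \in \mathcal{B}^{s}(\mathbb{R}^d)$ is the hypothesis. Assuming $u^{(i)} \in \mathcal{B}^{s}(\mathbb{R}^d)$, Lemma \ref{lmm:barron_adm} immediately yields the admissibility of $u^{(i)}$ and the existence of a unique classical solution $V^{(i)} \in \mathcal{B}^{s+1}(\mathbb{R}^d)$ of the linearized HJB equation (\ref{alg:1_solve_ghjb_2nd}). The next iterate $u^{(i+1)} = -\tfrac{1}{2} R^{-1} g^{T} \nabla V^{(i)}$ then lies in $\mathcal{B}^{s}(\mathbb{R}^d)$ by Lemma \ref{lmm:constant_c_R2}, closing the induction.

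\emph{Pointwise monotonicity.} Subtract the linearized HJB equations for $V^{(i)}$ and $V^{(i+1)}$ and set $W := V^{(i)} - V^{(i+1)}$. Using $\mathcal{L}_{u^{(i)}} V^{(i)} - \mathcal{L}_{u^{(i+1)}} V^{(i+1)} = \mathcal{L}_{u^{(i+1)}} W + g(u^{(i)}-u^{(i+1)}) \cdot \nabla V^{(i)}$, one obtains
\begin{align*}
\gamma W - \mathcal{L}_{u^{(i+1)}} W = p, \quad p := g(u^{(i)}-u^{(i+1)}) \cdot \nabla V^{(i)} + \Vert u^{(i)} \Vert_R^2 - \Vert u^{(i+1)} \Vert_R^2.
\end{align*}
The critical observation is that $u^{(i+1)}$ is the pointwise minimizer of the quadratic map $u \mapsto g u \cdot \nabla V^{(i)} + \Vert u \Vert_R^2$, which forces $p \geq 0$ on $\mathbb{R}^d$. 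I would then apply It\^o's formula to $e^{-\gamma t} W(X_t^{u^{(i+1)}})$ along the admissible dynamics driven by $u^{(i+1)}$, integrate from $0$ to $T$, and take expectation; the gradient stochastic integral has vanishing expectation since $\nabla W \in \mathcal{B}^{s}(\mathbb{R}^d) \hookrightarrow L^{\infty}(\mathbb{R}^d)$. Sending $T \to \infty$ and using boundedness of $W$ (via $\mathcal{B}^{s+1}(\mathbb{R}^d) \hookrightarrow L^{\infty}(\mathbb{R}^d)$) to kill the boundary term yields the Feynman--Kac representation
\begin{align*}
W(x) = \mathbb{E}\left[\int_{0}^{\infty} e^{-\gamma t} p(X_t^{u^{(i+1)}}) \, \dd t\right] \geq 0,
\end{align*}
so $V^{(i+1)}(x) \leq V^{(i)}(x)$ for every $x \in \mathbb{R}^d$.

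\emph{Main obstacle.} The delicate step is the sign identification $p \geq 0$; a sign slip in the rearrangement or in the characterization of the minimizer would reverse the direction of monotonicity. I would verify it by completing the square in the quadratic-in-$u$ Hamiltonian and matching against the explicit formula for $u^{(i+1)}$ from (\ref{alg:2_update_ctrl_2nd}). The remaining work -- verifying that $p$ is bounded so the integral exists, and that the boundary term vanishes -- is routine bookkeeping based on the spectral Barron algebra (Propositions \ref{prop:barron_alg}, \ref{prop:barron_embed}) together with Lemmas \ref{lmm:constant_c_R1} and \ref{lmm:constant_c_R2}, much as in the proof of Lemma \ref{lmm:barron_adm}.
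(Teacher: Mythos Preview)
Your proposal is correct and follows essentially the same approach as the paper: the induction step is identical, and for monotonicity both you and the paper apply It\^o's formula along the dynamics driven by $u^{(i+1)}$ to obtain a Feynman--Kac representation of $V^{(i)}-V^{(i+1)}$ with nonnegative integrand. The only cosmetic difference is that the paper applies It\^o to $V^{(i)}$ and $V^{(i+1)}$ separately and then subtracts, computing the integrand explicitly as $\lVert u^{(i+1)}-u^{(i)}\rVert_R^2$, whereas you work directly with the difference $W$ and invoke the minimization property of $u^{(i+1)}$ to see $p\geq 0$; these are equivalent since in fact $p=\lVert u^{(i+1)}-u^{(i)}\rVert_R^2$.
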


\begin{proof}
    1. We show by induction that the policy iteration (Alg.~\ref{alg:HJB_2nd_ord}) generates a sequence of admissible controls $\{u^{(i)}\}$ in $\mathcal{B}^{s}(\mathbb{R}^d)$ and a sequence of value functions $\{V^{(i)}\}$ in $\mathcal{B}^{s+1}(\mathbb{R}^d)$. Suppose $u^{(i)} \in \mathcal{B}^{s}(\mathbb{R}^d)$. By Lemma \ref{lmm:barron_adm}, $u^{(i)}$ is admissible and there exists a unique solution $V^{(i)} \in \mathcal{B}^{s+1}(\mathbb{R}^d)$ to the equation (\ref{alg:1_solve_ghjb_2nd}). By Proposition \ref{prop:barron_alg} (2), $\nabla V^{(i)} \in \mathcal{B}^{s}(\mathbb{R}^d)$. Since $u^{(i+1)}$ is updated according to (\ref{alg:2_update_ctrl_2nd}), by Lemma \ref{lmm:constant_c_R2}, $u^{(i+1)} \in \mathcal{B}^{s}(\mathbb{R}^d)$. Applying Lemma \ref{lmm:barron_adm} again, $u^{(i+1)}$ is admissible. 

    2. We next show that for any $x \in \mathbb{R}^d$, $\{V^{(i)}(x)\}$ is a decreasing sequence. 
    Since $u^{(i+1)}$ is admissible, there exists a unique (strong) solution $X_t^{(i+1)} := X_t^{u^{(i+1)}}$ of the SDE
    \begin{align*}
        dX_t^{u^{(i+1)}} = \Bigl( f\bigl(X_t^{u^{(i+1)}}\bigr) + g\bigl(X_t^{u^{(i+1)}} \bigr) u^{(i+1)}\bigl(X_t^{u^{(i+1)}} \bigr) \Bigr) dt + \sqrt{2} dW_t, \qquad X^{u^{(i+1)}}_0 = x. \nonumber
    \end{align*}    
    By It\^o's formula, we obtain
    \begin{align}
        & d \Bigl( e^{-\gamma t} V^{(i)}\bigl(X^{(i+1)}_t \bigr) \Bigr) \nonumber \\
        = & e^{-\gamma t} \Bigl(
        -\gamma V^{(i)} + \mathcal{L}_{u^{(i+1)}} V^{(i)} \Bigr)\bigl(X^{(i+1)}_t \bigr) dt + \sqrt{2} e^{-\gamma t} \nabla V^{(i)}\bigl(X^{(i+1)}_t \bigr) dW_t. \nonumber
    \end{align} 
    For any $T \geq 0$, 
    \begin{align}
        & e^{-\gamma T} V^{(i)}\bigl(X^{(i+1)}_T \bigr) - V^{(i)}\bigl(X^{(i+1)}_0 \bigr) \nonumber \\
        = & \int_{0 }^{T} e^{-\gamma t} \Bigl(
            -\gamma V^{(i)} + \mathcal{L}_{u^{(i+1)}} V^{(i)} \Bigr)\bigl(X^{(i+1)}_t \bigr) dt + \sqrt{2} \int_{0 }^{T}  e^{-\gamma t} \nabla V^{(i)}\bigl(X^{(i+1)}_t \bigr) dW_t. \nonumber
    \end{align}
    Taking expectation with $T \to \infty$ (note $V^{(i)}$ is bounded on $\mathbb{R}^d$), we have 
    \begin{align}
        - V^{(i)}(x) = \mathbb{E} \left[ \int_{0 }^{\infty} e^{-\gamma t} \Bigl(
            -\gamma V^{(i)} + \mathcal{L}_{u^{(i+1)}} V^{(i)} \Bigr)\bigl(X^{(i+1)}_t \bigr) dt \right]. \nonumber
    \end{align}
    Similarly, we have 
    \begin{align}
        - V^{(i+1)}(x) = \mathbb{E} \left[ \int_{0 }^{\infty} e^{-\gamma t} \Bigl(
            -\gamma V^{(i+1)} + \mathcal{L}_{u^{(i+1)}} V^{(i+1)} \Bigr)\bigl(X^{(i+1)}_t \bigr) dt \right]. \nonumber
    \end{align}
    Hence 
    \begin{align}
        & V^{(i+1)}(x) - V^{(i)}(x) \nonumber \\
        = & \mathbb{E} \left[ \int_{0 }^{\infty} e^{-\gamma t} \Bigl( -\gamma V^{(i)} + \mathcal{L}_{u^{(i+1)}} V^{(i)} + \gamma V^{(i+1)} - \mathcal{L}_{u^{(i+1)}} V^{(i+1)} \Bigr)\bigl(X^{(i+1)}_t \bigr) dt \right]. \nonumber
    \end{align}
    We know that 
    \begin{align}
        \nabla V^{(i)}\cdot (f + gu^{(i+1)}) = \gamma V^{(i)} + \nabla V^{(i)} \cdot  g(u^{(i+1)} - u^{(i)}) - (\ell + \Vert u^{(i)} \Vert_R^2) - \Delta V^{(i)}, \nonumber
    \end{align}
    and 
    \begin{align}
        \nabla V^{(i+1)}\cdot (f + gu^{(i+1)}) = \gamma V^{(i+1)}- (\ell + \Vert u^{(i+1)} \Vert_R^2) - \Delta V^{(i+1)}. \nonumber
    \end{align}
    So 
    \begin{align}
        & V^{(i+1)}(x) - V^{(i)}(x) = \nonumber \\
        & = \mathbb{E} \left[ \int_{0 }^{\infty} e^{-\gamma t} \Bigl( \Vert u^{(i+1)} \Vert_R^2 - \Vert u^{(i)} \Vert_R^2 + \nabla V^{(i)} \cdot g(u^{(i+1)} - u^{(i)}) \Bigr) \bigl(X^{(i+1)}_t \bigr) dt \right]. \nonumber
    \end{align}
    Since $u^{(i+1)} = - \frac{1}{2} R^{-1} g^T \nabla V^{(i)}$, 
    \begin{align}
        & V^{(i+1)}(x) - V^{(i)}(x) =\nonumber \\
        & \quad =  \mathbb{E} \left[ \int_{0 }^{\infty} e^{-\gamma t} \Bigl( \Vert u^{(i+1)} \Vert_R^2 - \Vert u^{(i)} \Vert_R^2 + \nabla V^{(i)} \cdot g(u^{(i+1)} - u^{(i)}) \Bigr) \bigl(X^{(i+1)}_t \bigr) dt \right]\nonumber \\
        & \quad =  \mathbb{E} \left[ \int_{0 }^{\infty} e^{-\gamma t} \Bigl( \Vert u^{(i+1)} \Vert_R^2 - \Vert u^{(i)} \Vert_R^2 - 2 \Vert u^{(i+1)} \Vert_R^2 + 2 \langle u^{(i+1)}, u^{(i)}\rangle_R \Bigr) \bigl(X^{(i+1)}_t \bigr) dt \right] \nonumber \\
        & \quad =  \mathbb{E} \left[ \int_{0 }^{\infty} e^{-\gamma t} \Bigl( - \Vert  u^{(i+1)} - u^{(i)} \Vert_R^2 \Bigr) \bigl(X^{(i+1)}_t \bigr) dt \right]\leq 0. \nonumber \qedhere
    \end{align}
\end{proof}

Proposition \ref{prop:ply_iter_spec_barron} shows that the sequences of controls and value functions lie in the spectral Barron spaces as long as the initial control and the coefficients are spectral Barron functions, and that the sequence of value functions is pointwise decreasing. 
Proposition \ref{prop:ply_iter_spec_barron}, however, does not guarantee that the sequence $\{V^{(i)}\}$ converges. In the following Propositions \ref{prop:bdd_iterates} and \ref{prop:uni_conv_opt}, we prove that when the discount factor is large enough, $\{V^{(i)}\}$ is bounded in spectral Barron norm and thus uniformly bounded on $\mathbb{R}^d$.  As a result, $\{V^{(i)}\}$ decreases pointwise to some limit function $\bar{V}$. In fact, we can show that $\{V^{(i)}\}$ converges locally uniformly to $\bar{V}$ on $\mathbb{R}^d$.  

We first show in Proposition \ref{prop:bdd_iterates} that when the discount factor $\gamma$ is sufficiently large,  if the spectral Barron norm of the initial control is sufficiently small, then the sequences of controls and value functions are both uniformly bounded in spectral Barron spaces. 

\begin{prop}\label{prop:bdd_iterates}
    Suppose that Assumption \ref{asp:coef} holds for $s \geq 1$. Let $C_{R,1}$ and $C_{R,2}$ be the positive constants given in Lemma \ref{lmm:constant_c_R1} and Lemma \ref{lmm:constant_c_R2} respectively. Assume that 
    \begin{align}\label{asp:large_gamma}
        2(\sqrt{1+ \gamma} -1) \geq \Vert f \Vert_{\mathcal{B}^{s}(\mathbb{R}^d)} + 2 \Vert g \Vert_{\mathcal{B}^{s}(\mathbb{R}^d)} \Vert \ell \Vert_{\mathcal{B}^{s}(\mathbb{R}^d)}^{1/2} \left(C_{R,1} C_{R,2}^2 + C_{R,2} \right)^{1/2}. 
    \end{align}
    Define $h(x) = \dfrac{a + b x^2 }{c - dx}$ where 
    \begin{equation*}
        \begin{aligned}[c]
            a & := C_{R,2} \Vert g \Vert_{\mathcal{B}^{s}(\mathbb{R}^d)} \Vert \ell \Vert_{\mathcal{B}^{s}(\mathbb{R}^d)}, \\
            b & := C_{R,1} C_{R,2} \Vert g \Vert_{\mathcal{B}^{s}(\mathbb{R}^d)},
        \end{aligned}
        \hspace{2em}
        \begin{aligned}[c]
            c & := 2(\sqrt{1+ \gamma} -1) - \Vert f \Vert_{\mathcal{B}^{s}(\mathbb{R}^d)}, \\
            d & := \Vert g \Vert_{\mathcal{B}^{s}(\mathbb{R}^d)}.
        \end{aligned}
    \end{equation*}
    Then there exists $x_0 \in (0, c/d)$ such that $x_0 = h(x_0)$. Further, assume that the initial control $u^{(0)} \in \mathcal{B}^{s}(\mathbb{R}^d)$ and satisfies $\Vert u^{(0)} \Vert_{\mathcal{B}^{s}(\mathbb{R}^d)} < x_0$. Then Algorithm \ref{alg:HJB_2nd_ord} gives a sequence of admissible controls $\{u^{(i)}\}$ in $\mathcal{B}^{s}(\mathbb{R}^d)$ and a sequence of value functions $\{V^{(i)}\}$ in $\mathcal{B}^{s+1}(\mathbb{R}^d)$. Moreover, $\{u^{(i)}\}$ and $\{V^{(i)}\}$ are uniformly bounded in $\mathcal{B}^{s}(\mathbb{R}^d)$ and $\mathcal{B}^{s+1}(\mathbb{R}^d)$. Specifically, 
    \begin{align}
        \sup_{i \in \mathbb{N}} \Vert u^{(i)} \Vert_{\mathcal{B}^{s}(\mathbb{R}^d)} \leq x_0, \text{ and } \sup_{i \in \mathbb{N}} \Vert V^{(i)} \Vert_{\mathcal{B}^{s+1}(\mathbb{R}^d)} \leq a_0 h(x_0). \nonumber
    \end{align}
    where $a_0 = \left( C_{R,2} \Vert g \Vert_{\mathcal{B}^{s}(\mathbb{R}^d)} \right)^{-1}$. 
\end{prop}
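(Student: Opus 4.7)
The plan is to close an induction on $i$ in which the spectral Barron norm of the control iterate is dominated by the one-dimensional recursion $\Vert u^{(i+1)} \Vert_{\mathcal{B}^{s}(\mathbb{R}^d)} \leq h(\Vert u^{(i)} \Vert_{\mathcal{B}^{s}(\mathbb{R}^d)})$, so that the fixed point $x_0$ of $h$ traps the iterates and the value-function bound drops out as a by-product.

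First I would settle existence of $x_0$. Clearing denominators turns $x = h(x)$ into the quadratic $(b+d) x^{2} - c x + a = 0$, and a direct computation gives $4 a (b+d) = 4 \Vert g \Vert_{\mathcal{B}^{s}(\mathbb{R}^d)}^{2} \Vert \ell \Vert_{\mathcal{B}^{s}(\mathbb{R}^d)} \bigl( C_{R,1} C_{R,2}^{2} + C_{R,2} \bigr)$, so hypothesis (\ref{asp:large_gamma}) is exactly the statement that $c^{2} \geq 4 a (b+d)$. With $a, b, c, d > 0$, Vieta's formulas force both roots to be positive, and the smaller root satisfies $x_{0} \leq c/(2(b+d)) < c/d$, which places it in the required interval.

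Second I would derive the one-step recursion. Rewriting the linearized HJB equation (\ref{alg:1_solve_ghjb_2nd}) as $(\gamma I - \Delta) V^{(i)} = (f + g u^{(i)}) \cdot \nabla V^{(i)} + \ell + \Vert u^{(i)} \Vert_{R}^{2}$ and applying the second estimate of Proposition \ref{prop:bdd_gamma_delta}, together with the product algebra of Proposition \ref{prop:barron_alg} and Lemma \ref{lmm:constant_c_R1} on the source, yields
\begin{align*}
2(\sqrt{1+\gamma}-1)\, \Vert V^{(i)} \Vert_{\mathcal{B}^{s+1}(\mathbb{R}^d)} \leq \bigl( \Vert f \Vert_{\mathcal{B}^{s}(\mathbb{R}^d)} + \Vert g \Vert_{\mathcal{B}^{s}(\mathbb{R}^d)} \Vert u^{(i)} \Vert_{\mathcal{B}^{s}(\mathbb{R}^d)} \bigr) \Vert V^{(i)} \Vert_{\mathcal{B}^{s+1}(\mathbb{R}^d)} + \Vert \ell \Vert_{\mathcal{B}^{s}(\mathbb{R}^d)} + C_{R,1} \Vert u^{(i)} \Vert_{\mathcal{B}^{s}(\mathbb{R}^d)}^{2}.
\end{align*}
When $\Vert u^{(i)} \Vert_{\mathcal{B}^{s}(\mathbb{R}^d)} < c/d$, the $V^{(i)}$ term can be moved to the left, producing $\Vert V^{(i)} \Vert_{\mathcal{B}^{s+1}(\mathbb{R}^d)} \leq \bigl( \Vert \ell \Vert_{\mathcal{B}^{s}(\mathbb{R}^d)} + C_{R,1} \Vert u^{(i)} \Vert_{\mathcal{B}^{s}(\mathbb{R}^d)}^{2} \bigr) / \bigl( c - d \Vert u^{(i)} \Vert_{\mathcal{B}^{s}(\mathbb{R}^d)} \bigr)$. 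Feeding this into Lemma \ref{lmm:constant_c_R2} applied to the update (\ref{alg:2_update_ctrl_2nd}) then gives exactly $\Vert u^{(i+1)} \Vert_{\mathcal{B}^{s}(\mathbb{R}^d)} \leq h(\Vert u^{(i)} \Vert_{\mathcal{B}^{s}(\mathbb{R}^d)})$, and the $V^{(i)}$ bound can be rewritten as $\Vert V^{(i)} \Vert_{\mathcal{B}^{s+1}(\mathbb{R}^d)} \leq a_{0}\, h(\Vert u^{(i)} \Vert_{\mathcal{B}^{s}(\mathbb{R}^d)})$.

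Third I would close the induction via monotonicity of $h$. Computing $h'(x) = (a d + 2 b c x - b d x^{2})/(c - d x)^{2}$ and noting that the numerator is a downward parabola positive at both $x = 0$ and $x = c/d$ shows that $h$ is strictly increasing on $[0, c/d)$. Starting from $\Vert u^{(0)} \Vert_{\mathcal{B}^{s}(\mathbb{R}^d)} < x_{0}$, monotonicity together with the one-step recursion inductively gives $\Vert u^{(i)} \Vert_{\mathcal{B}^{s}(\mathbb{R}^d)} \leq x_{0}$ and hence $\Vert V^{(i)} \Vert_{\mathcal{B}^{s+1}(\mathbb{R}^d)} \leq a_{0}\, h(x_{0})$ for every $i$, while admissibility of $u^{(i)}$ and existence of $V^{(i)} \in \mathcal{B}^{s+1}(\mathbb{R}^d)$ along the iteration are inherited from Proposition \ref{prop:ply_iter_spec_barron}. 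The main obstacle is the constant bookkeeping in the second step: one has to recognize that the prefactor $2(\sqrt{1+\gamma}-1)$ generated by the middle inequality of Proposition \ref{prop:bdd_gamma_delta} is precisely the quantity appearing in the discriminant condition of the first step, so that hypothesis (\ref{asp:large_gamma}) becomes the sharp criterion for a trap radius to exist. Once this algebraic alignment is made, the remainder is a routine monotone induction on a one-dimensional rational map.
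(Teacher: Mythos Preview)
Your proposal is correct and follows essentially the same route as the paper: reduce the iteration to the one-dimensional recursion $\Vert u^{(i+1)} \Vert_{\mathcal{B}^{s}} \leq h(\Vert u^{(i)} \Vert_{\mathcal{B}^{s}})$, locate a fixed point $x_0$ via the discriminant condition equivalent to (\ref{asp:large_gamma}), and trap the iterates by monotonicity of $h$ on $[0,c/d)$. The only cosmetic difference is that the paper packages your self-referential estimate as a Neumann-series bound on $(\mathcal{T}_{u^{(i)}})^{-1}$, whereas you obtain it directly by applying Proposition~\ref{prop:bdd_gamma_delta} to $V^{(i)}=(\gamma I-\Delta)^{-1}[(f+gu^{(i)})\cdot\nabla V^{(i)}+\ell+\Vert u^{(i)}\Vert_R^2]$ and rearranging; the finiteness of $\Vert V^{(i)}\Vert_{\mathcal{B}^{s+1}}$ needed for that rearrangement is indeed supplied by Proposition~\ref{prop:ply_iter_spec_barron}, as you note.
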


\begin{proof}
    1. Given the assumption that $\Vert g \Vert_{\mathcal{B}^{s}(\mathbb{R}^d)} > 0$ and the discount factor $\gamma$ satisfies (\ref{asp:large_gamma}), we have $b, c, d > 0$ and $a \geq 0$.  Moreover, (\ref{asp:large_gamma}) entails that
    \begin{align}
        \left( 2(\sqrt{1+ \gamma} -1) - \Vert f \Vert_{\mathcal{B}^{s}(\mathbb{R}^d)}\right)^2 - 4 C_{R,2} \Vert g \Vert_{\mathcal{B}^{s}(\mathbb{R}^d)}^2 \Vert \ell \Vert_{\mathcal{B}^{s}(\mathbb{R}^d)} \left( C_{R,1} C_{R,2} + 1\right) \geq 0, \nonumber
    \end{align}
    that is, $c^2 - 4a(b+d) \geq 0$, which implies that there exists at least one real solution $x_0$ to the equation $x = h(x)$, i.e., $(b+d)x^2 - cx +a = 0$. In fact, $x_0$ is a fixed point of $h(x)$. Since $h$ is positive for all $x < c/d$ and negative for $x > c/d$, we see that $0 \leq x_0 < c/d$. Notice that $h(x)$ is strictly increasing on $[0, x_0)$ because
    \begin{align}
        h'(x) = \dfrac{2bc x - bd x^2 + ad }{(c-dx)^2} \nonumber
    \end{align}
    has two roots $x_1 < 0$ and $x_2 > 0$. There is a local minimum at $x_1 < 0$, and $h(x)$ is strictly increasing on $(x_1, c/d)$, while $h(x) \to \infty$ as $x \to (c/d)^{-}$. In this way, we obtain 
    \begin{align}\label{eq:fix_pt_h}
        0 \leq x < x_0 \hspace{0.5em} \Longrightarrow \hspace{0.5em} 0 \leq h(0) \leq h(x) < h(x_0) = x_0. 
    \end{align}

    2. Fix $s \geq 1$. We start by choosing an inital control $u^{(0)}$ with 
    \begin{align}
        0 \leq \Vert u^{(0)} \Vert_{\mathcal{B}^{s}(\mathbb{R}^d)} < x_0 < c/d = \left( 2(\sqrt{1 + \gamma}-1) - \Vert f \Vert_{\mathcal{B}^{s}(\mathbb{R}^d)}\right) / \Vert g \Vert_{\mathcal{B}^{s}(\mathbb{R}^d)}, \nonumber
    \end{align}
    which allows $u^{(0)}$ to satisfy
    \begin{align}\label{ineq:u_bdd}
        \dfrac{1}{2(\sqrt{1+\gamma} - 1)} \Vert f + g u^{(0)} \Vert_{\mathcal{B}^{s}(\mathbb{R}^d)} < 1. 
    \end{align}
    Since the operator $(\gamma - \Delta)^{-1} (f + g u^{(0)}) \cdot \nabla: \mathcal{B}^{s}(\mathbb{R}^d) \to \mathcal{B}^{s}(\mathbb{R}^d)$ is linear and bounded, and by Proposition \ref{prop:bdd_gamma_delta}, it satisfies
    \begin{align}
        \Vert (\gamma - \Delta)^{-1} (f + gu^{(0)}) \cdot \nabla \Vert_{\mathcal{B}^{s}(\mathbb{R}^d)\to \mathcal{B}^{s}(\mathbb{R}^d)} \leq \dfrac{1}{2(\sqrt{1+\gamma} - 1)} \Vert f + gu^{(0)} \Vert_{\mathcal{B}^{s}(\mathbb{R}^d)}, \nonumber
    \end{align}
    it follows from (\ref{ineq:u_bdd}) that the inverse $\left( I - \left(\gamma - \Delta \right)^{-1} (f + gu^{(0)}) \cdot \nabla \right)^{-1}$ exists with
    \begin{align}
        \Vert \Bigl( I - \left(\gamma - \Delta \right)^{-1} (f + gu^{(0)}) \cdot \nabla \Bigr)^{-1} \Vert_{\mathcal{B}^{s}(\mathbb{R}^d)\to \mathcal{B}^{s}(\mathbb{R}^d)} \leq \dfrac{1 }{1 - C_\gamma \Vert (f + gu^{(0)}) \Vert_{\mathcal{B}^{s}(\mathbb{R}^d)}},  \nonumber
    \end{align}
    where $C_\gamma = \dfrac{1}{2(\sqrt{1+\gamma} - 1)}$. 
    Define 
    \begin{align}
        \mathcal{T}_{u^{(0)}} := \gamma - \Delta - (f + g u^{(0)}) \cdot \nabla. \nonumber
    \end{align}
    Since 
    \begin{align}
        \left( \mathcal{T}_{u^{(0)}} \right)^{-1} = \left( I - \left(\gamma - \Delta \right)^{-1} (f + g u^{(0)}) \cdot \nabla \right)^{-1} (\gamma - \Delta)^{-1}, \nonumber
    \end{align}
    we obtain
    \begin{align}\label{eq:bdd_T}
        \Vert \left( \mathcal{T}_{u^{(0)}} \right)^{-1} \Vert_{\mathcal{B}^{s}(\mathbb{R}^d) \to \mathcal{B}^{s+1}(\mathbb{R}^d)} & \leq \dfrac{C_\gamma }{1 - C_{\gamma} \Vert (f + g u^{(0)}) \Vert_{\mathcal{B}^{s}(\mathbb{R}^d)}} \nonumber \\
        & \leq \dfrac{1 }{2(\sqrt{1+ \gamma} -1) - \Vert (f + g u^{(0)}) \Vert_{\mathcal{B}^{s}(\mathbb{R}^d)}}. 
    \end{align}
    The value function $V^{(0)}$ satisfies
    \begin{align}
        \mathcal{T}_{u^{(0)}} V^{(0)} = \ell + \Vert u^{(0)} \Vert_R^2. \nonumber
    \end{align}
    By Lemma \ref{lmm:constant_c_R1} and (\ref{eq:bdd_T}) we obtain
    \begin{align}
        \Vert V^{(0)} \Vert_{\mathcal{B}^{s+1}(\mathbb{R}^d)} & \leq \Vert \left( \mathcal{T}_{u^{(0)}} \right)^{-1} \bigl( \ell + \Vert u^{(0)} \Vert_R^2 \bigr) \Vert_{\mathcal{B}^{s+1}(\mathbb{R}^d)} \nonumber \\
        & \leq \dfrac{\Vert \ell \Vert_{\mathcal{B}^{s}(\mathbb{R}^d)} + C_{R,1} \Vert u^{(0)} \Vert_{\mathcal{B}^{s}(\mathbb{R}^d)}^2 }{\left(2(\sqrt{1+ \gamma} -1) - \Vert f \Vert_{\mathcal{B}^{s}(\mathbb{R}^d)}\right) - \Vert g \Vert_{\mathcal{B}^{s}(\mathbb{R}^d)} \Vert u^{(0)} \Vert_{\mathcal{B}^{s}(\mathbb{R}^d)}}. \nonumber
    \end{align}
    Then by Lemma \ref{lmm:constant_c_R2} and (\ref{eq:fix_pt_h}), we get 
    \begin{align}
        0 \leq \Vert u^{(1)} \Vert_{\mathcal{B}^{s}(\mathbb{R}^d)} & \leq C_{R,2} \Vert g \Vert_{\mathcal{B}^{s}(\mathbb{R}^d)} \Vert V^{(0)} \Vert_{\mathcal{B}^{s+1}(\mathbb{R}^d)} \nonumber \\
        & \leq \dfrac{ C_{R,2} \Vert g \Vert_{\mathcal{B}^{s}(\mathbb{R}^d)} \Bigl( \Vert \ell \Vert_{\mathcal{B}^{s}(\mathbb{R}^d)} + C_{R,1} \Vert u^{(0)} \Vert_{\mathcal{B}^{s}(\mathbb{R}^d)}^2 \Bigr) }{\left(2(\sqrt{1+ \gamma} -1) - \Vert f \Vert_{\mathcal{B}^{s}(\mathbb{R}^d)}\right) - \Vert g \Vert_{\mathcal{B}^{s}(\mathbb{R}^d)} \Vert u^{(0)} \Vert_{\mathcal{B}^{s}(\mathbb{R}^d)}} \nonumber \\
        & = h\bigl( \Vert u^{(0)} \Vert_{\mathcal{B}^{s}(\mathbb{R}^d)} \bigr) < x_0 < c/d. \nonumber
    \end{align}
    Then we iterate again and get 
    \begin{align}
        \Vert V^{(1)} \Vert_{\mathcal{B}^{s+1}(\mathbb{R}^d)} & \leq \Vert \left( \mathcal{T}_{u^{(1)}} \right)^{-1} \bigl( \ell + \Vert u^{(1)} \Vert_R^2 \bigr) \Vert_{\mathcal{B}^{s+1}(\mathbb{R}^d)} \nonumber \\
        & \leq \dfrac{\Vert \ell \Vert_{\mathcal{B}^{s}(\mathbb{R}^d)} + C_{R,1} \Vert u^{(1)} \Vert_{\mathcal{B}^{s}(\mathbb{R}^d)}^2 }{\left(2(\sqrt{1+ \gamma} -1) - \Vert f \Vert_{\mathcal{B}^{s}(\mathbb{R}^d)}\right) - \Vert g \Vert_{\mathcal{B}^{s}(\mathbb{R}^d)} \Vert u^{(1)} \Vert_{\mathcal{B}^{s}(\mathbb{R}^d)}}, \nonumber
    \end{align}
    and 
    \begin{align}
        0 \leq \Vert u^{(2)} \Vert_{\mathcal{B}^{s}(\mathbb{R}^d)} & \leq C_{R,2} \Vert g \Vert_{\mathcal{B}^{s}(\mathbb{R}^d)} \Vert V^{(1)} \Vert_{\mathcal{B}^{s+1}(\mathbb{R}^d)} \nonumber \\
        & \leq \dfrac{ C_{R,2} \Vert g \Vert_{\mathcal{B}^{s}(\mathbb{R}^d)} \Bigl( \Vert \ell \Vert_{\mathcal{B}^{s}(\mathbb{R}^d)} + C_{R,1} \Vert u^{(1)} \Vert_{\mathcal{B}^{s}(\mathbb{R}^d)}^2 \Bigr) }{\left(2(\sqrt{1+ \gamma} -1) - \Vert f \Vert_{\mathcal{B}^{s}(\mathbb{R}^d)}\right) - \Vert g \Vert_{\mathcal{B}^{s}(\mathbb{R}^d)} \Vert u^{(1)} \Vert_{\mathcal{B}^{s}(\mathbb{R}^d)}} \nonumber \\
        & = h\bigl( \Vert u^{(1)} \Vert_{\mathcal{B}^{s}(\mathbb{R}^d)} \bigr) < x_0 < c/d. \nonumber 
    \end{align}
    Continuing in this way, we obtain that 
    \begin{align}
        \sup_{i \in \mathbb{N}} \Vert u^{(i)} \Vert_{\mathcal{B}^{s}(\mathbb{R}^d)} \leq x_0 < c/d. \nonumber
    \end{align}
    Denote $a_0 := \bigl( C_{R,2} \Vert g \Vert_{\mathcal{B}^{s}(\mathbb{R}^d)} \bigr)^{-1}$, then for each $i \in \mathbb{N}$, 
    \begin{align}
        \Vert V^{(i)} \Vert_{\mathcal{B}^{s+1}(\mathbb{R}^d)} \leq a_0 h \bigl( \Vert u^{(i)} \Vert_{\mathcal{B}^{s}(\mathbb{R}^d)} \bigr) \leq a_0 h(x_0), \nonumber
    \end{align}
    which implies
    \begin{equation*}
        \sup_{i \in \mathbb{N}} \Vert V^{(i)} \Vert_{\mathcal{B}^{s+1}(\mathbb{R}^d)} \leq a_0 h(x_0) < \infty.  \qedhere
    \end{equation*}
\end{proof}

Recall that Proposition \ref{prop:ply_iter_spec_barron} shows that the sequence of spectral Barron value functions $\{V^{(i)}\}$ given by Algorithm \ref{alg:HJB_2nd_ord} decreases pointwise. In proving the next theorem, we show that if the sequence $\{V^{(i)}\}$ is uniformly bounded in spectral Barron norm, then $\{V^{(i)}\}$ decreases to some limit $\bar{V}$. In fact, a subsequence of $\{V^{(i)}\}$ converges locally uniformly to the limit $\bar{V}$ and $\bar{V}$ is a solution to the HJB equation (\ref{eq:HJB_qd}). 

\begin{prop}\label{prop:uni_conv_opt}
    Suppose that Assumption \ref{asp:coef} holds for $s \geq 2$. Under the same assumption as Proposition \ref{prop:bdd_iterates}, there exists a sequence $\{V^{(i)}\}$ in $\mathcal{B}^{s+1}(\mathbb{R}^d)$ and $\bar{V} \in C^2(\mathbb{R}^d)$ such that $\{ V^{(i)}\}$ converges to $\bar{V}$ in $C^2_{\mathrm{loc}}(\mathbb{R}^d)$, and the sequence $u^{(i)} = -\frac{1}{2} R^{-1} g^T \nabla V^{(i-1)}$ converges to $\bar{u} = - \frac{1}{2} R^{-1} g^{T} \nabla \bar{V}$ in $C^1_{\mathrm{loc}}(\mathbb{R}^d)$. 
    As a result, $\bar{V}$ is a classical solution to the HJB equation (\ref{eq:HJB_qd}) on $\mathbb{R}^d$. 
\end{prop}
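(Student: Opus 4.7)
The plan is to combine the pointwise monotonicity of $\{V^{(i)}\}$ from Proposition \ref{prop:ply_iter_spec_barron} with the uniform spectral Barron bounds of Proposition \ref{prop:bdd_iterates}, upgrade pointwise convergence to $C^2_{\mathrm{loc}}$ convergence via Arzel\`a--Ascoli, and then pass to the limit in the linearized HJB equation (\ref{alg:1_solve_ghjb_2nd}). First, Proposition \ref{prop:ply_iter_spec_barron} gives that $\{V^{(i)}(x)\}$ is nonincreasing at every $x \in \mathbb{R}^d$. Since $\{V^{(i)}\}$ is uniformly bounded in $\mathcal{B}^{s+1}(\mathbb{R}^d) \hookrightarrow L^\infty(\mathbb{R}^d)$ by Proposition \ref{prop:bdd_iterates} and Proposition \ref{prop:barron_embed}(1), the pointwise limit $\bar{V}(x) := \lim_{i \to \infty} V^{(i)}(x)$ is well-defined on $\mathbb{R}^d$.

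Next, I would establish uniform $C^3$ bounds. Since $s \geq 2$, we have $s+1 \geq 3$, so iterating Proposition \ref{prop:barron_alg}(2) and then applying Proposition \ref{prop:barron_embed}(1) dominates the sup norms of $V^{(i)}$ and all partial derivatives of order up to $3$ by $\|V^{(i)}\|_{\mathcal{B}^{s+1}(\mathbb{R}^d)}$, which is uniformly bounded in $i$. In particular, $\{V^{(i)}\}$, $\{\nabla V^{(i)}\}$ and $\{D^2 V^{(i)}\}$ are uniformly bounded and, via the $L^\infty$ bound on one additional derivative, equicontinuous on every compact subset of $\mathbb{R}^d$. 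Arzel\`a--Ascoli then yields a subsequence converging in $C^2_{\mathrm{loc}}(\mathbb{R}^d)$ to a limit that must coincide pointwise with $\bar{V}$. The same reasoning applied to any subsequence of $\{V^{(i)}\}$ shows the full sequence converges to $\bar{V}$ in $C^2_{\mathrm{loc}}(\mathbb{R}^d)$, and in particular $\bar{V} \in C^2(\mathbb{R}^d)$. Because $g \in \mathcal{B}^s(\mathbb{R}^d) \subset C^1(\mathbb{R}^d)$ with bounded first derivatives (Propositions \ref{prop:nn_apprx_spec_barron}, \ref{prop:barron_embed}(1) and \ref{prop:barron_alg}(2)) and $V^{(i-1)} \to \bar{V}$ in $C^2_{\mathrm{loc}}$, the identity $u^{(i)} = -\tfrac{1}{2} R^{-1} g^T \nabla V^{(i-1)}$ together with the product rule yields $u^{(i)} \to \bar{u} := -\tfrac{1}{2} R^{-1} g^T \nabla \bar{V}$ in $C^1_{\mathrm{loc}}(\mathbb{R}^d)$.

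Finally, I would pass to the limit in the equation
\[
    -\gamma V^{(i)} + (f + g u^{(i)}) \cdot \nabla V^{(i)} + \Delta V^{(i)} + \ell + \|u^{(i)}\|_R^2 = 0
\]
satisfied classically by each $V^{(i)}$. The $C^2_{\mathrm{loc}}$ convergence of $V^{(i)}$ controls the $\Delta V^{(i)}$ and $\nabla V^{(i)}$ terms, while the $C^1_{\mathrm{loc}}$ convergence of $u^{(i)}$ together with the continuity and boundedness of $f$, $g$, $\ell$ handles the remaining terms, yielding, locally uniformly,
\[
    -\gamma \bar{V} + (f + g \bar{u}) \cdot \nabla \bar{V} + \Delta \bar{V} + \ell + \|\bar{u}\|_R^2 = 0.
\]
Since $\bar{u} = -\tfrac{1}{2} R^{-1} g^T \nabla \bar{V}$ is exactly the pointwise minimizer of $u \mapsto \mathcal{L}_u \bar{V} + \ell + \|u\|_R^2$, this is equivalent to the HJB equation (\ref{eq:HJB_qd}), identifying $(\bar{u}, \bar{V})$ as a classical optimal solution.

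The main obstacle is the compactness step: without the uniform $C^3$ bound---which requires both $s \geq 2$ in Assumption \ref{asp:coef} and the uniform $\mathcal{B}^{s+1}$ bound from Proposition \ref{prop:bdd_iterates}---one cannot upgrade pointwise monotone convergence to $C^2_{\mathrm{loc}}$ convergence, and in particular the nonlinear term $\|u^{(i)}\|_R^2$ and the drift $(f + g u^{(i)}) \cdot \nabla V^{(i)}$ cannot be handled under the limit.
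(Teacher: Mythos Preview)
Your proposal is correct and follows essentially the same route as the paper: monotonicity from Proposition~\ref{prop:ply_iter_spec_barron} plus the uniform $\mathcal{B}^{s+1}$ bound from Proposition~\ref{prop:bdd_iterates} give a pointwise limit, the resulting uniform $C^3$ control feeds Arzel\`a--Ascoli for $C^2_{\mathrm{loc}}$ compactness, and one passes to the limit in the linearized equation~(\ref{alg:1_solve_ghjb_2nd}). The only difference is that the paper extracts a single subsequence via a diagonalization argument and then relabels it as $\{V^{(i)}\}$, whereas your subsequence-of-subsequence argument (every subsequence has a further subsequence with the same pointwise limit $\bar V$) cleanly delivers convergence of the full sequence in the metrizable $C^2_{\mathrm{loc}}$ topology.
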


\begin{proof}
    By Proposition \ref{prop:bdd_iterates}, Algorithm \ref{alg:HJB_2nd_ord} gives a sequence of admissible controls $\{u^{(i)}\}$ in $\mathcal{B}^{s}(\mathbb{R}^d)$ and a sequence of value functions $\{V^{(i)}\}$ in $\mathcal{B}^{s+1}(\mathbb{R}^d)$. Moreover, we have $\sup_{i \in \mathbb{N}}\Vert V^{(i)} \Vert_{\mathcal{B}^{s+1}(\mathbb{R}^d)} < \infty$. Thus $\sup_{i \in \mathbb{N}} \Vert \widehat{V^{(i)}} \Vert_{L^1(\mathbb{R}^d)} < \infty$. Then there exists $M_0 > 0$ such that for all $i \in \mathbb{N}$, 
    \begin{align}
        |V^{(i)}(x)| \leq \int_{\mathbb{R}^d} |\widehat{V^{(i)}}(\xi)| \dd \xi < M_0, \hspace{1em} \forall x \in \mathbb{R}^d. \nonumber
    \end{align}
    Thus $\{V^{(i)}\}$ is uniformly bounded on $\mathbb{R}^d$. In particular, $\{V^{(i)}\}$ is uniformly bounded below. Recall that Proposition \ref{prop:ply_iter_spec_barron} shows that $\{V^{(i)}\}$ is pointwise decreasing, hence $\{V^{(i)}\}$ converges pointwise to some limit $\bar{V}$ on $\mathbb{R}^d$. Further, by Proposition \ref{prop:barron_alg} (2), we also have
    \begin{align}
        \sup_{i \in \mathbb{N}}\Vert \nabla^m V^{(i)} \Vert_{\mathcal{B}^{s+1-m}(\mathbb{R}^d)} < \infty, \text{ for } m = 1, 2, 3. \nonumber
    \end{align}
    Recall that $s \geq 2$, hence there also exist $M_1, M_2, M_3 > 0$ such that 
    \begin{align}
        \sup_{i \in \mathbb{N}} \sup_{x \in \mathbb{R}^d} |\nabla V^{(i)}(x)| < M_1, \hspace{.5em} \sup_{i \in \mathbb{N}} \sup_{x \in \mathbb{R}^d} |\nabla^2 V^{(i)}(x)| < M_2, \hspace{.5em} \sup_{i \in \mathbb{N}} \sup_{x \in \mathbb{R}^d} |\nabla^3 V^{(i)}(x)| < M_3. \nonumber
    \end{align}
    For any $i \in \mathbb{N}$ and any $x, y \in \mathbb{R}^d$, we have 
    \begin{align}
        |V^{(i)}(x) - V^{(i)}(y)| \leq M_1 |x - y|. \nonumber
    \end{align}
    This shows that $\{V^{(i)}\}$ is equicontinuous on $\mathbb{R}^d$. By the Arzela-Ascoli theorem, on any compact domain $K \subset \mathbb{R}^d$, there exists a subsequence of $\{V^{(i)}\}$ that converges uniformly to some continuous function $\bar{V}_{K}$ on $K$. Since $V^{(i)}$ converges pointwise to $\bar{V}$ on $\mathbb{R}^d$, we have $\bar{V} = \bar{V}_{K}$ on $K$. Since $\mathbb{R}^d = \bigcup_{j \in \mathbb{N}} \overline{B_j(0)}$, by applying the diagonalization argument, one can extract a subsequence of $\{V^{(i)}\}$ that converges uniformly to $\bar{V}$ in every $\overline{B_j(0)}$ and thus converges uniformly to $\bar{V}$ on any compact domain $ K \subset \mathbb{R}^d$. With a little abuse of notation, we denote the subsequence as $\{V^{(i)}\}$ itself. Therefore, $\bar{V}$ is continuous and $\{V^{(i)}\}$ converges locally uniformly to $\bar{V}$ on $\mathbb{R}^d$. 
    
    Similarly, for any $i \in \mathbb{N}$ and $x, y \in \mathbb{R}^d$,
    \begin{align}
        |\nabla V^{(i)}(x) - \nabla V^{(i)}(y)| \leq M_2 |x - y|. \nonumber
    \end{align} 
    Thus $\{\nabla V^{(i)}\}$ is also uniformly bounded and equicontinuous on $\mathbb{R}^d$. By the Arzela-Ascoli theorem, for any compact domain $K \subset \mathbb{R}^d$, there exists a subsequence of $\{\nabla V^{(i)}\}$ that converges uniformly on $K$. Since $\{V^{(i)}\}$ converges uniformly to $\bar{V}$ on $K$, it follows that $\bar{V}$ is differentiable and the subsequence of $\{\nabla V^{(i)}\}$ converges uniformly to $\nabla \bar{V}$ on $K$. Moreover, $\nabla \bar{V}$ is continuous on $K$ as $\nabla V^{(i)}$'s are continuous. Since $K$ is arbitrary, $\bar{V}$ is continuously differentiable on $\mathbb{R}^d$. By applying the diagonalization argument as before, we assert that there exists a subsequence of $\nabla V^{(i)}$, still denoted as $\nabla V^{(i)}$ itself, that converges locally uniformly to $\nabla \bar{V}$ on $\mathbb{R}^d$. 

    By repeating the argument for $\{\nabla^2 V^{(i)}\}$, one obtains that $\bar{V}$ is twice continuously differentiable and, by extracting a subsequence if necessary, $\{\nabla^2 V^{(i)}\}$ that converges locally uniformly to $\nabla^2 \bar{V}$ on $\mathbb{R}^d$. Define 
    \begin{align}
        \bar{u}(x) := -\frac{1}{2} R^{-1} g(x)^T \nabla \bar{V}(x). \nonumber
    \end{align}
    Then the sequence of controls $\{u^{(i)}\}$ converges locally uniformly to $\bar{u}$ on $\mathbb{R}^d$. Notice that $g \in \mathcal{B}^{s}(\mathbb{R}^d)$ for $s \geq 2$ implies $g$ is continuously differentiable on $\mathbb{R}^d$, hence $\bar{u}$ is continuously differentiable and $\{ \nabla u^{(i)} \}$ converges locally uniformly to $\nabla \bar{u}$ on $\mathbb{R}^d$ as well. Taking the limit $i \to \infty$ in the equation
    \begin{align}
        -\gamma V^{(i)}(x) + \mathcal{L}_{u^{(i)}} V^{(i)}(x) + \ell(x) + \Vert u^{(i)}(x) \Vert_R^2 = 0 \nonumber
    \end{align}
    gives 
    \begin{align}
        -\gamma \bar{V}(x) + \mathcal{L}_{\bar{u}} \bar{V}(x) + \ell(x) + \Vert \bar{u}(x) \Vert_R^2 = 0,  \hspace{1em} x\in \mathbb{R}^d. \nonumber
    \end{align}
    Therefore, $\bar{V}$ satisfies (\ref{eq:HJB_equiv}) and thus is a classical solution of the HJB equation (\ref{eq:HJB_qd}) in $\mathbb{R}^d$. 
\end{proof}

\begin{remark}
    In proving Proposition \ref{prop:uni_conv_opt}, the reason we require $\mathcal{B}^s$ regularity for $s \geq 2$ is to obtain the uniform bound of $\{\nabla^3 V^{(i)}\}$ in the spectral Barron space, which then guarantees the equicontinuity of $\{\nabla^2 V^{(i)}\}$. 
\end{remark}

\begin{proof}[Proof of Theorem \ref{thm:main_uni_conv}]
    It is apparent from (\ref{asp:large_gamma}) that there exists a constant $C(f,g, \ell,R)$ depending on the coefficients $f,g, \ell,R$ such that $\gamma \geq C(f,g, \ell,R)$ implies $\gamma$ satisfies (\ref{asp:large_gamma}). The statement of Theorem \ref{thm:main_uni_conv} is a direct consequence of Proposition \ref{prop:uni_conv_opt}. 
\end{proof}

\begin{remark}
    In the proofs of Propositions \ref{prop:bdd_iterates}, \ref{prop:uni_conv_opt}, and Theorem \ref{thm:main_uni_conv}, the assumption that $\gamma$ is sufficiently large is used to ensure that the policy iteration sequence remains bounded in the spectral Barron space, which is essential for our analysis. Whether comparable approximation results can be established when $\gamma$ is small remains unclear. We expect that this regime presents additional analytical challenges and may require new techniques beyond those employed here, and will be left as an open research direction. 
\end{remark}

\begin{proof}[Proof of Theorem \ref{thm:main_nn_appx}]
    By Proposition \ref{prop:uni_conv_opt}, the sequence $\{\nabla^k u^{(i)}\}$ converges uniformly to $\nabla^k \bar{u}$ on $K$ for $k = 0,1$. Let $n \in \mathbb{N}$. Then there exists $N \in \mathbb{N}$ such that 
    \begin{align}
        \sup_{x \in K} |\nabla^k u^{(N)}(x) - \nabla^k \bar{u}(x)| < \dfrac{1}{2 \sqrt{n}} \text{ for } k = 0,1. \nonumber
    \end{align}
    Note that $|K|$ is finite. This implies 
    \begin{align}
        \Vert u^{(N)} - \bar{u} \Vert_{H^1(K)} \leq \sqrt{|K|} n^{-1/2}. \nonumber
    \end{align}
    Since $u^{(N)} \in \mathcal{B}^{1}(\mathbb{R}^d)$, by Proposition \ref{prop:nn_apprx_spec_barron}, there exists a cosine-activated two-layer neural network $u_n$ with $n$ hidden neurons 
    \begin{align}
        u_n(x) = \dfrac{1}{n} \sum_{j=1}^{n} a_j \cos(w_j \cdot x + b_j), \nonumber
    \end{align}
    where $a_j, b_j \in \mathbb{R}$ and $w_j \in \mathbb{R}^d$ for each $j$, such that 
    \begin{align}
        \Vert u_n - u^{(N)} \Vert_{H^1(K)} \leq \sqrt{|K|} \Vert u^{(N)} \Vert_{\mathcal{B}^{1}(\mathbb{R}^d)} n^{-1/2}. \nonumber
    \end{align}
    By Proposition \ref{prop:bdd_iterates}, $\sup_{i \in \mathbb{N}} \Vert u^{(i)} \Vert_{\mathcal{B}^{1}(\mathbb{R}^d)} < \infty$, so we obtain
    \begin{align}
        \Vert u_n - u^{(N)} \Vert_{H^1(K)} \leq \sqrt{|K|} \Bigl( \sup_{i \in \mathbb{N}} \Vert u^{(i)} \Vert_{\mathcal{B}^{1}(\mathbb{R}^d)} \Bigr) n^{-1/2}.
    \end{align}
    Hence, 
    \begin{align}
        \Vert u_n - \bar{u} \Vert_{H^1(K)} \leq \sqrt{|K|} \Bigl(1 + \sup_{i \in \mathbb{N}} \Vert u^{(i)} \Vert_{\mathcal{B}^{1}(\mathbb{R}^d)}  \Bigr) n^{-1/2}. \nonumber
    \end{align}
    
    Similarly, by Proposition \ref{prop:uni_conv_opt}, $\{\nabla^m V^{(i)}\}$ converges uniformly to $\nabla^m \bar{V}$ on $\bar{K}$ for $m = 0,1,2$. Then there exists a sufficiently large $N$ such that 
    \begin{align}
        \sup_{x \in K} |\nabla^m V^{(N)}(x) - \nabla^m \bar{V}(x)| < \dfrac{1}{3 \sqrt{n}} \text{ for } m = 0,1,2. \nonumber
    \end{align}
    This implies 
    \begin{align}
        \Vert V^{(N)} - \bar{V} \Vert_{H^2(K)} \leq \sqrt{|K|} n^{-1/2}. \nonumber
    \end{align}
    Since $V^{(N)} \in \mathcal{B}^{2}(\mathbb{R}^d)$, by Proposition \ref{prop:nn_apprx_spec_barron}, there exists a cosine-activated two-layer neural network $V_n$ with $n$ neurons 
    \begin{align}
        V_n(x) = \dfrac{1}{n} \sum_{j=1}^{n} c_j \cos(v_j \cdot x + d_j), \nonumber
    \end{align}
    where $c_j, d_j \in \mathbb{R}$ and $v_j \in \mathbb{R}^d$ such that 
    \begin{align}
        \Vert V_n - V^{(N)} \Vert_{H^2(K)} \leq \sqrt{|K|} \Vert V^{(N)} \Vert_{\mathcal{B}^2(\mathbb{R}^d)} n^{-1/2}. \nonumber
    \end{align}
    By Proposition \ref{prop:bdd_iterates}, $\sup_{i \in \mathbb{N}} \Vert V^{(i)} \Vert_{\mathcal{B}^{2}(\mathbb{R}^d)} < \infty$. Therefore, 
    \begin{equation*}
        \Vert V_n - \bar{V} \Vert_{H^2(K)} \leq \sqrt{|K|} \Bigl(1 + \sup_{i \in \mathbb{N}} \Vert V^{(i)} \Vert_{\mathcal{B}^{2}(\mathbb{R}^d)}  \Bigr) n^{-1/2}. \qedhere
    \end{equation*}
\end{proof}

\appendix
\section{Regularity of Linear Second-order Elliptic PDEs in Spectral Barron Spaces} \label{app:reg_elliptic}

This section aims to prove the regularity theorem of linear second-order elliptic PDEs in spectral Barron spaces. The main idea is to apply the Fredholm alternative. Similar techniques are used in \cite{chen2023regularity}. 
The following Kolmogorov-Riesz theorem will be used to prove the compactness of the operator $(\gamma I - \Delta)^{-1} \mu \cdot \nabla$ in the spectral Barron space. The proof is standard and can be found, for example, in \cite{hanche2010kolmogorov}. 

\begin{theorem}[Kolmogorov-Riesz]
    For $1 \leq p < \infty$, a subset $\mathcal{F} \subset L^p(\mathbb{R}^d)$ is totally bounded if and only if the following three conditions hold:
    \begin{itemize}
        \item[(i)] $\mathcal{F}$ is bounded.
        \item[(ii)] For any $\epsilon > 0$, there exists $R > 0$ for which 
        \begin{align}
            \int_{|x| > R} |f(x)|^p \dd x < \epsilon^p, \hspace{1em} \forall f \in \mathcal{F}. \nonumber
        \end{align}
        \item[(iii)] For any $\epsilon > 0$, there exists $\delta > 0$ for which 
        \begin{align}
            \int_{\mathbb{R}^d} |f(x+y) - f(x)|^p \dd x < \epsilon^p, \hspace{1em} \forall f \in \mathcal{F}, \forall |y| < \delta. \nonumber
        \end{align}
    \end{itemize}
\end{theorem}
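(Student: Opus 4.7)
The plan is to establish the two directions separately. For necessity, assume $\mathcal{F}$ is totally bounded. Condition (i) is immediate. For (ii), fix $\epsilon > 0$ and choose a finite $(\epsilon/2)$-net $\{f_1,\ldots,f_N\} \subset L^p(\mathbb{R}^d)$ for $\mathcal{F}$. Since each $f_j \in L^p$, pick $R_j$ so that $\int_{|x|>R_j}|f_j|^p \dd x < (\epsilon/2)^p$, and set $R = \max_j R_j$. For any $f \in \mathcal{F}$, select $f_j$ with $\|f - f_j\|_p < \epsilon/2$; restricting to $\{|x|>R\}$ and applying Minkowski's inequality gives (ii). For (iii), use the standard fact that translation is continuous in $L^p$ for each fixed $f_j$, together with an identical triangle-inequality argument applied to $\|f(\cdot + y) - f\|_p$.

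For sufficiency, which is the substantial direction, I would combine spatial localization via (ii), smoothing via (iii), and a compactness argument on a ball. Fix $\epsilon > 0$. By (ii), choose $R$ with $\int_{|x|>R}|f|^p \dd x < (\epsilon/4)^p$ for every $f \in \mathcal{F}$. Let $\rho \in C_c^{\infty}(\mathbb{R}^d)$ be a standard mollifier with $\int \rho = 1$, and set $\rho_\delta(x) = \delta^{-d}\rho(x/\delta)$. For $f \in \mathcal{F}$, define $f_\delta = f * \rho_\delta$. Writing $f_\delta(x) - f(x) = \int \rho(y)\bigl(f(x-\delta y) - f(x)\bigr) \dd y$ and applying Minkowski's integral inequality bounds $\|f_\delta - f\|_p$ by $\sup_{|z|<\delta \mathrm{diam}\,\mathrm{supp}\,\rho}\|f(\cdot - z) - f\|_p$, so (iii) lets us choose $\delta > 0$ small enough that $\|f - f_\delta\|_p < \epsilon/4$ uniformly in $f \in \mathcal{F}$.

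Next I would show that the family $\{f_\delta|_{\overline{B_R(0)}} : f \in \mathcal{F}\}$ is relatively compact in $C(\overline{B_R(0)})$ via Arzelà-Ascoli. Uniform boundedness follows from Young's inequality together with (i): $\|f_\delta\|_\infty \leq \|f\|_p \|\rho_\delta\|_{p'}$. Equicontinuity follows from the pointwise bound $|f_\delta(x) - f_\delta(x')| \leq \|f\|_p \|\rho_\delta(\cdot - x) - \rho_\delta(\cdot - x')\|_{p'}$, where the right side tends to zero as $|x - x'| \to 0$ by uniform continuity of $\rho_\delta$, uniformly in $f$ thanks to (i). Hence $\{f_\delta|_{\overline{B_R(0)}}\}$ has a finite $(\epsilon/(4|B_R|^{1/p}))$-net in $C(\overline{B_R(0)})$, which by Hölder's inequality yields an $(\epsilon/4)$-net in $L^p(B_R(0))$. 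Extending these net elements by zero outside $B_R(0)$ and combining the three errors (tail, mollification, finite net) via the triangle inequality produces a finite $\epsilon$-net for $\mathcal{F}$ in $L^p(\mathbb{R}^d)$.

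The main obstacle is the sufficiency direction, and specifically the careful bookkeeping required to combine the three approximation errors without double counting or losing uniformity across $\mathcal{F}$. A secondary technical point is verifying the equicontinuity of $\{f_\delta\}$ uniformly in $f \in \mathcal{F}$: this relies crucially on the uniform $L^p$ bound from (i) together with the smoothness of $\rho_\delta$, which is where the interplay between (i) and the mollification enters the argument.
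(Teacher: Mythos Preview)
Your proof is correct and follows the standard route (see, e.g., Brezis or the reference the paper cites). Note, however, that the paper itself does \emph{not} prove this theorem: it simply states it and refers to \cite{hanche2010kolmogorov} for the proof, so there is no in-paper argument to compare against. Your sketch is more than what the paper provides; both the necessity direction (finite net plus continuity of translation) and the sufficiency direction (tail cutoff via (ii), uniform mollification error via (iii), Arzel\`a--Ascoli on the mollified family via (i)) are carried out correctly, and your identification of the bookkeeping in combining the three errors as the main technical point is accurate.
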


We then apply the Kolmogorov-Riesz theorem to show that the operator $(\gamma I - \Delta)^{-1} \mu \cdot \nabla$ is compact.  

\begin{lemma}\label{lmm:compact}
    For any $s \geq 1$, assume that $\gamma > 0$ and $\mu : \mathbb{R}^d \to \mathbb{R}^d \in \mathcal{B}^{s-1}(\mathbb{R}^d)$. Then the operator $\mathcal{T} = (\gamma I - \Delta)^{-1} \mu \cdot \nabla: \mathcal{B}^{s}(\mathbb{R}^d) \to \mathcal{B}^{s}(\mathbb{R}^d)$ is compact. 
\end{lemma}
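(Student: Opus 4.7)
The plan is to apply the Kolmogorov-Riesz theorem to an $L^1$ image of the range of $\mathcal{T}$. Via the isometric embedding $\mathcal{B}^s(\mathbb{R}^d) \hookrightarrow L^1(\mathbb{R}^d,\mathbb{C})$ given by $f \mapsto (1+|\xi|)^s \hat{f}(\xi)$, compactness of $\mathcal{T}$ is equivalent to total boundedness in $L^1(\mathbb{R}^d,\mathbb{C})$ of
\[
F := \Bigl\{ G_f(\xi) = (1+|\xi|)^s \widehat{\mathcal{T} f}(\xi) = \frac{(1+|\xi|)^s}{\gamma + |\xi|^2}\,\widehat{\mu\cdot\nabla f}(\xi) \,:\, \|f\|_{\mathcal{B}^s(\mathbb{R}^d)} \leq 1 \Bigr\}.
\]
I would then verify the three Kolmogorov-Riesz conditions on $F$: boundedness, tightness, and uniform translation continuity.

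For the first two conditions, the key factorization is
\[
G_f(\xi) = \phi(\xi)\,\tilde{h}_f(\xi), \qquad \phi(\xi) := \frac{1+|\xi|}{\gamma+|\xi|^2}, \qquad \tilde{h}_f(\xi) := (1+|\xi|)^{s-1}\widehat{\mu\cdot\nabla f}(\xi).
\]
By Propositions \ref{prop:barron_alg} and \ref{prop:bdd_gamma_delta}, $\|\tilde{h}_f\|_{L^1} = \|\mu\cdot\nabla f\|_{\mathcal{B}^{s-1}(\mathbb{R}^d)} \leq \|\mu\|_{\mathcal{B}^{s-1}(\mathbb{R}^d)} \|f\|_{\mathcal{B}^s(\mathbb{R}^d)}$, while $\phi$ is bounded on $\mathbb{R}^d$ with $\phi(\xi) \to 0$ as $|\xi|\to\infty$. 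This immediately gives $\sup_{f\in F}\|G_f\|_{L^1} < \infty$ and $\sup_{f\in F}\int_{|\xi|>R}|G_f(\xi)|\,d\xi \to 0$ as $R\to\infty$.

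The main obstacle is condition (iii), uniform translation continuity. Splitting
\[
G_f(\xi+y) - G_f(\xi) = [\phi(\xi+y)-\phi(\xi)]\,\tilde{h}_f(\xi+y) + \phi(\xi)\,[\tilde{h}_f(\xi+y)-\tilde{h}_f(\xi)],
\]
the first term is $O(|y|)$ uniformly in $f$, since $\nabla\phi$ is bounded on $\mathbb{R}^d$ (so $\phi$ is globally Lipschitz) and $\|\tilde{h}_f\|_{L^1}$ is uniformly bounded. For the second term I would need uniform $L^1$ translation continuity of $\tilde{h}_f$. To establish this, I would exploit the convolution representation $\widehat{\mu\cdot\nabla f}(\xi) = \sum_i \int \hat{\mu}_i(\xi-\eta)\, i\eta_i\,\hat{f}(\eta)\,d\eta$, so that the translation difference becomes a convolution of $\tau_{-y}\hat{\mu}_i - \hat{\mu}_i$ with $i\eta_i\hat{f}(\eta)$. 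To distribute the weight $(1+|\xi|)^{s-1}$ between the two convolution factors I would apply the Peetre-type inequality
\[
(1+|\xi|)^{s-1} \leq 2^{s-1}\bigl[(1+|\xi-\eta|)^{s-1} + (1+|\eta|)^{s-1}\bigr] \qquad (s \geq 1),
\]
and handle the residual weight-difference via the H\"older/Lipschitz estimate $|(1+|\xi+y|)^{s-1} - (1+|\xi|)^{s-1}| \leq C_s\,|y|^{\min(1,s-1)}(1+|\xi|)^{\max(0,s-2)}$. Each resulting piece reduces to the standard $L^1$ translation continuity of $\hat{\mu}_i$ or $(1+|\cdot|)^{s-1}\hat{\mu}_i$, both of which lie in $L^1(\mathbb{R}^d)$ thanks to $\mu_i \in \mathcal{B}^{s-1}(\mathbb{R}^d)$. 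This yields $\sup_{f\in F}\|\tilde{h}_f(\cdot+y) - \tilde{h}_f\|_{L^1} \to 0$ as $y\to 0$, completing the verification of (iii) and hence of the compactness of $\mathcal{T}$ via Kolmogorov-Riesz.
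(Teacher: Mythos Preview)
Your overall strategy coincides with the paper's: both embed $\mathcal{B}^s$ isometrically into $L^1$ via $f\mapsto (1+|\xi|)^s\hat f(\xi)$ and verify the Kolmogorov--Riesz criteria for the image of the unit ball under $\mathcal{T}$. Conditions (i) and (ii) are treated identically, using that $\phi(\xi)=(1+|\xi|)/(\gamma+|\xi|^2)$ is bounded and vanishes at infinity together with the uniform bound $\|\mu\cdot\nabla f\|_{\mathcal{B}^{s-1}}\le\|\mu\|_{\mathcal{B}^{s-1}}$.

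The genuine difference is in condition (iii). The paper first restricts to $|\xi|\le 2R$ using (ii), then for a given $v$ approximates $\widehat{\mathcal{T}(v)}$ in $L^1$ by some $\varphi\in C_c^\infty$ and invokes the uniform continuity of $\varphi$ and of $(1+|\xi|)^s$ on compact sets. You instead exploit the convolution structure $\widehat{\mu\cdot\nabla f}=\sum_i \hat\mu_i * (i\eta_i\hat f)$: since the translation by $y$ lands entirely on the fixed factor $\hat\mu_i$, the uniform translation continuity follows from that of $\hat\mu_i$ and $(1+|\cdot|)^{s-1}\hat\mu_i$ in $L^1$, together with the weight estimates you indicate (Lipschitz bound for $\phi$, Peetre-type splitting, H\"older/Lipschitz control of $(1+|\xi|)^{s-1}$). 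The advantage of your route is that the resulting $\delta$ is manifestly independent of $f$, because it comes from the translation continuity of the \emph{fixed} $L^1$ functions $\hat\mu_i$; in the paper's argument the approximant $\varphi$---and with it the modulus of continuity used to pick $\delta$---depends on the particular $v$, so extra care is needed to obtain the required uniformity over the unit ball. Your approach sidesteps this issue cleanly, at the cost of a slightly more hands-on estimate.
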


\begin{proof}
    To show that $\mathcal{T}: \mathcal{B}^{s}(\mathbb{R}^d) \to \mathcal{B}^s(\mathbb{R}^d)$ is compact, it suffices to show that the image of the closed unit ball under $\mathcal{T}$ is relatively compact in $\mathcal{B}^{s}(\mathbb{R}^d)$. Since $\mathcal{B}^s(\mathbb{R}^d)$ is complete, any subset of $\mathcal{B}^s(\mathbb{R}^d)$ is relatively compact if and only it is totally bounded. Therefore, to show that $\mathcal{T}$ is compact, it suffices to show that the subset 
    \begin{align}
        \mathcal{F} := \left\{ \widehat{\mathcal{T}(v)}(\xi) \left(1 + |\xi| \right)^s \mid \Vert v \Vert_{\mathcal{B}^{s}(\mathbb{R}^d)} \leq 1 \right\} \subset L^1(\mathbb{R}^d) \nonumber
    \end{align}
    is totally bounded. We verify the three conditions of the Kolmogorov-Riesz theorem for $\mathcal{F}$.

    (i) For any $\Vert v \Vert_{\mathcal{B}^{s}(\mathbb{R}^d)} \leq 1$, by Propositions \ref{prop:barron_embed}, \ref{prop:barron_alg} and \ref{prop:bdd_gamma_delta}, 
    \begin{align}
        \Vert \widehat{\mathcal{T}(v)}(\xi) \left(1 + |\xi|\right)^s \Vert_{L^1(\mathbb{R}^d)} = \Vert (\gamma I - \Delta)^{-1} \mu \cdot \nabla v \Vert_{\mathcal{B}^s(\mathbb{R}^d)} \leq C_{\gamma} \Vert \mu \cdot \nabla v\Vert_{\mathcal{B}^{s-2}(\mathbb{R}^d)} \leq C_{\gamma} \Vert \mu \Vert_{\mathcal{B}^{s-1}(\mathbb{R}^d)}, \nonumber
    \end{align}
    where $C_{\gamma} = 1 + \gamma^{-1}$. 

    (ii) For any $\epsilon > 0$, choose $R > 0$ such that 
    \begin{align}
        \dfrac{1 + |\xi| }{\gamma + |\xi|^2} < \epsilon, \hspace{1em} \forall |\xi| > R. \nonumber
    \end{align}
    Then for any $\Vert v \Vert_{\mathcal{B}^{s}(\mathbb{R}^d)} \leq 1$, by Propositions \ref{prop:barron_embed} and \ref{prop:barron_alg}
    \begin{align}
        \int_{|\xi| > R} & |\widehat{\mathcal{T}(v)} (\xi)| \left(1 + |\xi| \right)^{s} d\xi = \int_{|\xi| > R} \dfrac{1 }{\gamma + |\xi|^2 } | \widehat{\mu \cdot \nabla v }(\xi)| \left(1 + |\xi| \right)^s d\xi \nonumber \\
        = &\int_{|\xi| > R} \dfrac{1 + |\xi|}{\gamma + |\xi|^2 } | \widehat{\mu \cdot \nabla v }(\xi)| \left(1 + |\xi| \right)^{s-1} d\xi \leq \epsilon \Vert \mu \cdot \nabla v \Vert_{\mathcal{B}^{s-1}(\mathbb{R}^d)} \leq \epsilon  \Vert \mu \Vert_{\mathcal{B}^{s-1}(\mathbb{R}^d)}. \nonumber
    \end{align}

    (iii) For any $\epsilon > 0$, by (ii) there exists $R > 0$ such that for any $\Vert v \Vert_{\mathcal{B}^{s}(\mathbb{R}^d)} \leq 1$, 
    \begin{align}
        \int_{|\xi| > R} | \widehat{\mathcal{T}(v)}(\xi)| \left(1 + |\xi|\right)^s \dd \xi \leq \epsilon. \nonumber
    \end{align}
    Set 
    \begin{align}
        L_1 := \max_{|\xi| \leq 2R} \dfrac{\left(1 + |\xi| \right)^s}{\gamma + |\xi|^2}, \hspace{1em} L_2 := \int_{|\xi| \leq 2R} \dd \xi = \mathrm{Vol}(B_{2R}(0)). \nonumber
    \end{align}
    Since $\mu \in \mathcal{B}^{s-1}(\mathbb{R}^d)$ for $s \geq 1$, we have $\mu \in \mathcal{B}^{0}(\mathbb{R}^d)$. Denote $\mu = (\mu_1, ..., \mu_d)$, then for each $1 \leq i \leq d$ we have $\widehat{\mu_i} \in L^1(\mathbb{R}^d)$. Hence there exists $\varphi_i \in C_c^{\infty}(\mathbb{R}^d)$ for all $1 \leq i \leq d$ such that 
    \begin{align}
        \sup_{1 \leq i \leq d} \Vert \widehat{\mu_i} - \varphi_i \Vert_{L^1(\mathbb{R}^d)} \leq \dfrac{\epsilon}{L_1}. \nonumber
    \end{align}
    Since $\xi \mapsto \dfrac{\left(1 + |\xi| \right)^s}{\gamma + |\xi|^2}$ is continuous, it is uniformly continuous on any compact subset of $\mathbb{R}^d$.
    Also, each $\varphi_i \in C^{\infty}_c(\mathbb{R}^d)$ and is uniformly continuous on $\mathbb{R}^d$. Thus, there exists $\delta < R$ such that 
    \begin{align}
        \left| \dfrac{\left(1 + |\xi| \right)^s}{\gamma + |\xi|^2} - \dfrac{\left(1 + |\xi'| \right)^s}{\gamma + |\xi'|^2} \right| \leq \epsilon, \hspace{1em} \forall \xi, \xi': |\xi| \leq 3R, |\xi'| \leq 3R, |\xi - \xi'| < \delta, \nonumber
    \end{align}
    and 
    \begin{align}
        \sup_{1 \leq i \leq d} \left| \varphi_i(\xi) - \varphi_i(\xi')\right| \leq \dfrac{\epsilon }{L_1 L_2}, \hspace{1em} \forall \xi, \xi': |\xi - \xi'| < \delta. \nonumber
    \end{align}
    For any $|y| < \delta < R$ and $\Vert v \Vert_{\mathcal{B}^{s}(\mathbb{R}^d)} \leq 1$, 
    \begin{align}\label{eq:cpt_Rd}
        & \int_{\mathbb{R}^d} \left| \widehat{\mathcal{T}(v)}(\xi + y) \left( 1 + |\xi + y| \right)^s - \widehat{\mathcal{T}(v)}(\xi ) \left( 1 + |\xi| \right)^s\right| \dd \xi \nonumber \\
        \leq & \int_{|\xi| > 2R} \left|\widehat{\mathcal{T}(v)}(\xi + y)   \right|\left( 1 + |\xi + y| \right)^s \dd \xi + \int_{|\xi| > 2R} \left|\widehat{\mathcal{T}(v)}(\xi)  \right|\left( 1 + |\xi| \right)^s \dd \xi  \nonumber \\
        & + \int_{|\xi| \leq 2 R} \left| \widehat{\mathcal{T}(v)}(\xi + y) \left( 1 + |\xi + y| \right)^s - \widehat{\mathcal{T}(v)}(\xi ) \left( 1 + |\xi| \right)^s\right| \dd \xi \nonumber \\
        \leq & 2 \epsilon + \int_{|\xi| \leq 2 R} \left| \widehat{\mathcal{T}(v)}(\xi + y) \left( 1 + |\xi + y| \right)^s - \widehat{\mathcal{T}(v)}(\xi ) \left( 1 + |\xi| \right)^s\right| \dd \xi.
    \end{align}
     Note that 
    \begin{align}\label{eq:tau_u}
        & \left| \widehat{\mathcal{T}(v)}(\xi + y) \left( 1 + |\xi + y| \right)^s - \widehat{\mathcal{T}(v)}(\xi ) \left( 1 + |\xi| \right)^s\right| \nonumber \\
        \leq & \left| \widehat{\mu \cdot \nabla v}(\xi + y) \dfrac{(1 +|\xi + y|)^s}{\gamma + |\xi + y|^2} - \widehat{\mu \cdot \nabla v}(\xi) \dfrac{(1 +|\xi |)^s}{\gamma + |\xi|^2} \right| \nonumber \\
        \leq & \left| \widehat{\mu \cdot \nabla v}(\xi + y) \right| \cdot \left| \dfrac{(1 +|\xi + y|)^s}{\gamma + |\xi + y|^2} - \dfrac{(1 +|\xi |)^s}{\gamma + |\xi|^2} \right| + \dfrac{(1 +|\xi |)^s}{\gamma + |\xi|^2}\cdot \left| \widehat{\mu \cdot \nabla v}(\xi + y) - \widehat{\mu \cdot \nabla v}(\xi) \right|. 
    \end{align}
    For the first term of (\ref{eq:tau_u}), we have the following estimate.
    \begin{align}\label{eq:cpt_2R_1}
        & \int_{|\xi| \leq 2 R} \left| \widehat{\mu \cdot \nabla v}(\xi + y) \right| \cdot \left| \dfrac{(1 +|\xi + y|)^s}{\gamma + |\xi + y|^2} - \dfrac{(1 +|\xi |)^s}{\gamma + |\xi|^2} \right| \dd \xi \nonumber \\
        \leq & \epsilon \int_{|\xi| \leq 2 R} \left| \widehat{\mu \cdot \nabla v}(\xi + y) \right| \dd \xi \nonumber \\
        \leq & \epsilon \Vert \mu \cdot \nabla v \Vert_{\mathcal{B}^{0}(\mathbb{R}^d)} \leq \epsilon \Vert \mu \Vert_{\mathcal{B}^{0}(\mathbb{R}^d)}  \Vert \nabla v \Vert_{\mathcal{B}^{0}(\mathbb{R}^d)} \leq \epsilon \Vert \mu \Vert_{\mathcal{B}^{0}(\mathbb{R}^d)} \Vert v \Vert_{\mathcal{B}^{1}(\mathbb{R}^d)} \leq \epsilon \Vert \mu \Vert_{\mathcal{B}^{0}(\mathbb{R}^d)}. 
    \end{align}
    For the second term of (\ref{eq:tau_u}), we estimate as follows.
    \begin{align}\label{eq:cpt_2R_2}
        & \int_{|\xi| \leq 2 R} \dfrac{(1 +|\xi |)^s}{\gamma + |\xi|^2}\cdot \left| \widehat{\mu \cdot \nabla v}(\xi + y) - \widehat{\mu \cdot \nabla v}(\xi) \right| \dd \xi \nonumber \\
        \leq & L_1 \int_{|\xi| \leq 2 R} \sum_{i=1}^{d} \left| \int_{\mathbb{R}^d} \widehat{\partial_i v}(\eta) \widehat{\mu_i}(\xi + y - \eta) \dd \eta - \int_{\mathbb{R}^d} \widehat{\partial_i v}(\eta) \widehat{\mu_i}(\xi - \eta) \dd \eta \right| \dd \xi \nonumber \\
        \leq & L_1 \int_{|\xi| \leq 2 R}  \sum_{i=1}^{d} \bigg\{ \int_{\mathbb{R}^d} |\widehat{\partial_i v}(\eta)| \bigg( |\widehat{\mu_i}(\xi + y - \eta) - \varphi_i(\xi + y - \eta)| + |\widehat{\mu_i}(\xi - \eta) - \varphi_i(\xi - \eta)| \nonumber \\
        & + |\varphi_i(\xi + y - \eta) - \varphi_i(\xi - \eta)| \bigg) \dd \eta \bigg\} \dd \xi \nonumber \\
        \leq & \sum_{i=1}^{d} \bigg\{ \dfrac{\epsilon}{L_2} \int_{|\xi| \leq 2 R} \dd \xi \int_{\mathbb{R}^d} |\widehat{\partial_i v}(\eta)| \dd \eta + 2 L_1 \Vert \widehat{\mu_i} - \varphi_i \Vert_{L^1(\mathbb{R}^d)} \int_{\mathbb{R}^d} |\widehat{\partial_i v}(\eta)| \dd \eta \bigg\} \nonumber \\
        \leq & 3 \epsilon \sum_{i=1}^{d} \int_{\mathbb{R}^d} |\widehat{\partial_i v}(\eta)| \dd \eta \leq 3 \epsilon \Vert \nabla v \Vert_{\mathcal{B}^{0}(\mathbb{R}^d)} \leq 3 \epsilon \Vert v \Vert_{\mathcal{B}^{1}(\mathbb{R}^d)} \leq 3 \epsilon. 
    \end{align} 
    Combining (\ref{eq:cpt_2R_1}) and (\ref{eq:cpt_2R_2}), 
    \begin{align}
        \int_{|\xi| \leq 2 R} \left| \widehat{\mathcal{T}(v)}(\xi + y) \left( 1 + |\xi + y| \right)^s - \widehat{\mathcal{T}(v)}(\xi ) \left( 1 + |\xi| \right)^s\right| \dd \xi \leq  (\Vert \mu \Vert_{\mathcal{B}^{0}(\mathbb{R}^d)}+3) \epsilon, \nonumber
    \end{align}
    and by (\ref{eq:cpt_Rd}), 
    \begin{align}
        \int_{\mathbb{R}^d} \left| \widehat{\mathcal{T}(v)}(\xi + y) \left( 1 + |\xi + y| \right)^s - \widehat{\mathcal{T}(v)}(\xi ) \left( 1 + |\xi| \right)^s\right| \dd \xi \leq  (\Vert \mu \Vert_{\mathcal{B}^{0}(\mathbb{R}^d)}+5) \epsilon \nonumber
    \end{align}
    for any $|y| < \delta$ and $\Vert v \Vert_{\mathcal{B}^{s}(\mathbb{R}^d)} \leq 1$. By the Kolmogorov-Riesz theorem, $\mathcal{F}$ is totally bounded. 
\end{proof}

\begin{lemma}\label{lmm:injective}
    For any $s \geq 1$, suppose that the vector-valued function $\mu : \mathbb{R}^d \to \mathbb{R}^d \in \mathcal{B}^{s-1}(\mathbb{R}^d)$. Then the operator $I + \mathcal{T}: \mathcal{B}^{s}(\mathbb{R}^d) \to \mathcal{B}^{s}(\mathbb{R}^d)$ is injective. 
\end{lemma}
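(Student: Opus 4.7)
The plan is to reduce $\mathcal{T}(v) = 0$ to the pointwise transport identity $\mu \cdot \nabla v = 0$, and then combine this with the decay of $v$ at infinity to force $v \equiv 0$.

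For the reduction, I would observe that $(\gamma I - \Delta)^{-1}$ acts on the Fourier side as multiplication by $(\gamma + |\xi|^2)^{-1}$, which is everywhere positive, so the operator is injective. For $v \in \mathcal{B}^s(\mathbb{R}^d)$, Proposition \ref{prop:barron_alg} gives $\nabla v \in \mathcal{B}^{s-1}(\mathbb{R}^d)$ and hence $\mu \cdot \nabla v \in \mathcal{B}^{s-1}(\mathbb{R}^d) \subset L^1(\mathbb{R}^d)$. Therefore $\mathcal{T}(v) = 0$ forces $\widehat{\mu \cdot \nabla v} \equiv 0$, so $\mu \cdot \nabla v = 0$ almost everywhere; Proposition \ref{prop:nn_apprx_spec_barron} then upgrades this to pointwise equality on $\mathbb{R}^d$, since $\mu$ and $\nabla v$ are continuous.

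To conclude $v \equiv 0$ from the transport identity, I would use that $\hat v \in L^1(\mathbb{R}^d)$ implies $v \in C_0(\mathbb{R}^d)$ by the Riemann--Lebesgue lemma, and then run a characteristics argument for $\mu$. For $s \geq 2$ the field $\mu$ is Lipschitz (as $\nabla \mu \in \mathcal{B}^{s-2}(\mathbb{R}^d) \subset L^\infty(\mathbb{R}^d)$) and bounded, so the ODE $\dot x = \mu(x)$ admits a unique global flow $\phi_t$; differentiating along a trajectory gives $\tfrac{d}{dt} v(\phi_t(x)) = \mu(\phi_t(x)) \cdot \nabla v(\phi_t(x)) = 0$, so $v$ is constant on every orbit of $\mu$. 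Combined with $v \to 0$ at spatial infinity, this forces $v \equiv 0$ on any orbit that is unbounded in either time direction.

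The main obstacle is ensuring that every point $x \in \mathbb{R}^d$ lies on such an unbounded orbit. This requires global dynamics information about $\mu$ beyond the mere membership $\mu \in \mathcal{B}^{s-1}(\mathbb{R}^d)$; indeed if $\mu$ vanishes on a set of positive measure the transport identity is vacuous there. I expect to handle the remaining cases either by invoking a nondegeneracy hypothesis on $\mu$ implicit in the intended application in Theorem \ref{thm:reg}, or by replacing the characteristics step with a Fourier-side argument: the convolution relation $\widehat{\mu \cdot \nabla v} = \hat\mu \ast (\ii \xi \hat v) \equiv 0$ for $\hat v \in L^1(\mathbb{R}^d)$, combined with a quantitative estimate controlled by $\Vert \mu \Vert_{\mathcal{B}^{s-1}(\mathbb{R}^d)}$, should pin down $\hat v \equiv 0$ and hence $v \equiv 0$.
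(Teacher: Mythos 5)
Your reduction targets the wrong equation, and the honest ``main obstacle'' you flag at the end is in fact fatal to the approach: as literally written, the claim that $\mathcal{T}$ itself is injective is false under the stated hypotheses (take $\mu \equiv 0$, which lies in $\mathcal{B}^{s-1}(\mathbb{R}^d)$; then $\mathcal{T}=0$). No nondegeneracy hypothesis on $\mu$ is hiding in the application, and no Fourier-side convolution estimate can rescue $\widehat{\mu\cdot\nabla v}\equiv 0 \Rightarrow \hat v\equiv 0$ in that degenerate case. The resolution is that the lemma is used in Corollary \ref{cor:bdd_inv} only through the Fredholm alternative, which requires injectivity of $I+\mathcal{T}$, not of $\mathcal{T}$; the paper's proof accordingly starts from $v+\mathcal{T}(v)=0$, i.e.\ from the full elliptic equation $\gamma v - \Delta v + \mu\cdot\nabla v = 0$, rather than from the first-order transport identity $\mu\cdot\nabla v = 0$ that you derive. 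The zeroth-order term $\gamma v$ with $\gamma>0$ is exactly what makes the conclusion reachable without any assumption on the dynamics of $\mu$.

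Concretely, the paper argues as follows: if $v\in\mathcal{B}^{s}(\mathbb{R}^d)$ solves $\gamma v - \Delta v + \mu\cdot\nabla v = 0$ and $v(x_0)\neq 0$ for some $x_0$, then since $\hat v\in L^1(\mathbb{R}^d)$ the Riemann--Lebesgue lemma gives $|v(x)|<\epsilon_0<|v(x_0)|$ for all $|x|>R$ with $R\geq|x_0|$; the weak maximum principle for this uniformly elliptic operator with nonnegative zeroth-order coefficient (Theorem 8.1 of Gilbarg--Trudinger) on the ball $B_{R+r}(0)$ forces $\sup_{|x|\leq R+r}|v| = \sup_{|x|=R+r}|v|$, producing a boundary point $x_r$ with $|v(x_r)|\geq|v(x_0)|$ and contradicting the decay at infinity. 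Your characteristics argument for the pure transport equation cannot see points where $\mu$ vanishes or orbits that stay bounded, which is precisely why it stalls; switching to the maximum principle for the second-order equation removes that obstruction entirely. You did correctly identify the decay of $v$ via Riemann--Lebesgue as the other essential ingredient, and that part of your argument carries over unchanged.
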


\begin{proof}
    Suppose there exists $v \in \mathcal{B}^{s}(\mathbb{R}^d)$ such that 
    \begin{align}
        v + \mathcal{T}(v) = 0 \text{ in } \mathbb{R}^d, \nonumber
    \end{align}
    which is equivalent to 
    \begin{align}
        \gamma v - \Delta v + \mu \cdot \nabla v = 0 \text{ in } \mathbb{R}^d. \nonumber
    \end{align}
    Suppose there exists $x_0 \in \mathbb{R}^d$ such that $v(x_0) \ne 0$. Then $|v(x_0)| > \epsilon_0 > 0$ for some $\epsilon_0 > 0$. Now since $v \in \mathcal{B}^{s}(\mathbb{R}^d)$ with $s \geq 1$, $\hat{v} \in L^1(\mathbb{R}^d)$. By the Riemann-Lebesgue lemma, $|v(x)| \to 0$ as $|x| \to \infty$. Then there exists $R > 0$ for which $|x_0| \leq R$ and $|v(x)| < \epsilon_0 < |v(x_0)|$ for all $|x| > R$. Choose $r > 0$, Proposition \ref{prop:nn_apprx_spec_barron} implies that $v \in H^1(B_{R+r}(0))$ as $v \in \mathcal{B}^1(\mathbb{R}^d)$. According to the weak maximum principle (see Theorem 8.1 in \cite{gilbarg1977elliptic}), 
    \begin{align}
        \sup_{|x| \leq R+r} |v(x)| = \sup_{|x| = R+r} |v(x)|. \nonumber
    \end{align}
    Therefore, there exists $x_r$ such that $|x_r| = R+r$ and $|v(x_r)| \geq |v(x_0)|$, a contradiction with $|v(x_r)| < \epsilon_0 < |v(x_0)|$. Thus $v \equiv 0$. 
\end{proof}

\begin{coro}\label{cor:bdd_inv}
    For any $s \geq 1$, suppose that the vector-valued function $\mu : \mathbb{R}^d \to \mathbb{R}^d \in \mathcal{B}^{s-1}(\mathbb{R}^d)$. Then the operator $I + \mathcal{T}: \mathcal{B}^{s}(\mathbb{R}^d) \to \mathcal{B}^{s}(\mathbb{R}^d)$ has bounded inverse. 
\end{coro}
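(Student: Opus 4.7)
The plan is to invoke the Fredholm alternative on the Banach space $\mathcal{B}^{s}(\mathbb{R}^d)$. Since the spectral Barron space is complete (being a weighted $L^1$ space of the Fourier transform), the theory of compact operators on Banach spaces applies. The two ingredients we need have already been established in the preceding lemmas: Lemma \ref{lmm:compact} shows that $\mathcal{T} = (\gamma I - \Delta)^{-1} \mu \cdot \nabla$ is compact on $\mathcal{B}^{s}(\mathbb{R}^d)$, and Lemma \ref{lmm:injective} shows that $I + \mathcal{T}$ has trivial kernel.

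First I would note that because $\mathcal{T}$ is compact, the operator $I + \mathcal{T}$ is Fredholm of index zero on the Banach space $\mathcal{B}^{s}(\mathbb{R}^d)$. In particular, $\dim \ker(I + \mathcal{T}) = \mathrm{codim}\,\mathrm{range}(I + \mathcal{T})$, and the range is closed. By Lemma \ref{lmm:injective}, $\ker(I + \mathcal{T}) = \{0\}$, so index zero forces $\mathrm{range}(I + \mathcal{T}) = \mathcal{B}^{s}(\mathbb{R}^d)$. Hence $I + \mathcal{T}$ is a continuous bijection from $\mathcal{B}^{s}(\mathbb{R}^d)$ to itself.

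Finally, I would invoke the open mapping theorem (or equivalently the bounded inverse theorem): a continuous bijection between Banach spaces has continuous inverse. This gives the boundedness of $(I + \mathcal{T})^{-1}$ on $\mathcal{B}^{s}(\mathbb{R}^d)$, completing the proof. There is no serious obstacle here since the hard analytic work has been done in Lemma \ref{lmm:compact} (via Kolmogorov–Riesz) and Lemma \ref{lmm:injective} (via the weak maximum principle); this corollary is simply the abstract functional-analytic packaging of those two facts.
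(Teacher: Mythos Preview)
Your proposal is correct and follows exactly the same approach as the paper: invoke the Fredholm alternative, using the compactness of $\mathcal{T}$ from Lemma~\ref{lmm:compact} and the injectivity of $I+\mathcal{T}$ from Lemma~\ref{lmm:injective}. The paper's proof is a one-line reference to these two lemmas and the Fredholm alternative; your version simply unpacks that reference with the index-zero and open mapping theorem details.
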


\begin{proof}
    By Lemmas \ref{lmm:compact} and \ref{lmm:injective}, the result follows immediately from the Fredholm alternative. 
\end{proof}

\begin{proof}[Proof of Theorem \ref{thm:reg}]
    Let $v^* := (I + \mathcal{T})^{-1} (\gamma I - \Delta)^{-1} v_f$ be the unique solution. Then we have 
    \begin{align}
        \Vert v^* \Vert_{\mathcal{B}^{s+1}(\mathbb{R}^d)} & \leq \Vert (I + \mathcal{T})^{-1} \Vert_{\mathcal{B}^{s+1}(\mathbb{R}^d) \to \mathcal{B}^{s+1}(\mathbb{R}^d)} \Vert (\gamma I - \Delta)^{-1} v_f \Vert_{\mathcal{B}^{s+1}(\mathbb{R}^d)} \nonumber \\
        & \leq \Vert (I + \mathcal{T})^{-1} \Vert_{\mathcal{B}^{s+1}(\mathbb{R}^d) \to \mathcal{B}^{s+1}(\mathbb{R}^d)} \Vert (\gamma I - \Delta)^{-1} v_f \Vert_{\mathcal{B}^{s+2}(\mathbb{R}^d)} \nonumber \\
        & \leq  (1 + \gamma^{-1}) \Vert (I + \mathcal{T})^{-1} \Vert_{\mathcal{B}^{s+1}(\mathbb{R}^d) \to \mathcal{B}^{s+1}(\mathbb{R}^d)} \Vert v_f \Vert_{\mathcal{B}^{s}(\mathbb{R}^d)}, \nonumber
    \end{align}
    where $\Vert (I + \mathcal{T})^{-1} \Vert_{B^{s+1}(\mathbb{R}^d) \to \mathcal{B}^{s+1}(\mathbb{R}^d)}  < \infty$ by Corollary \ref{cor:bdd_inv}. 
\end{proof}

\section{Omitted Proofs in Section \ref{sec:proofs}} \label{app:proofs_conv}

This appendix collects all the omitted proofs in Section~\ref{sec:proofs}. The proof of Proposition \ref{prop:nn_apprx_spec_barron} uses similar techniques as in \cite{chen2023regularity}. 

\begin{proof}[Proof of Proposition \ref{prop:nn_apprx_spec_barron}]
    Let $k \in \mathbb{N}$. Denote $\hat{f}(\xi) = |\hat{f}(\xi)| e^{\ii \theta(\xi)}$. Let $\mu$ be a probability measure on $\mathbb{R}^d$ with density
    \begin{align}
        \dfrac{|\hat{f}(\xi)| (1 + |\xi|)^k}{\Vert f \Vert_{\mathcal{B}^{k}(\mathbb{R}^d)}}. \nonumber
    \end{align}
    Since $f \in \mathcal{B}^0(\mathbb{R}^d)$, $\hat{f} \in L^1(\mathbb{R}^d)$. The real-valued function $f$ can be written as
    \begin{align}
        f(x) & = \int_{\mathbb{R}^d} \hat{f}(\xi) e^{\ii \xi \cdot x} \dd \xi \nonumber \\
        & = \int_{\mathbb{R}^d} |\hat{f}(\xi)| e^{\ii(\xi \cdot x + \theta(\xi))} \dd \xi \nonumber \\
        & = \int_{\mathbb{R}^d} |\hat{f}(\xi)| \left(1 + |\xi| \right)^{k} \left(1 + |\xi| \right)^{-k}e^{\ii(\xi \cdot x + \theta(\xi))} \dd \xi \nonumber \\
        & = \int_{\mathbb{R}^d} |\hat{f}(\xi)| \left(1 + |\xi| \right)^{k} \left(1 + |\xi| \right)^{-k} \cos\left(\xi \cdot x + \theta(\xi )\right) \dd \xi \nonumber \\
        & = \Vert f \Vert_{\mathcal{B}^{k}(\mathbb{R}^d)} \int_{\mathbb{R}^d} \dfrac{|\hat{f}(\xi)| \left(1 + |\xi| \right)^{k}}{\Vert f \Vert_{\mathcal{B}^{k}(\mathbb{R}^d)}} \left(1 + |\xi| \right)^{-k} \cos\left(\xi\cdot x + \theta(\xi  )\right) \dd \xi \nonumber \\
        & = \Vert f \Vert_{\mathcal{B}^{k}(\mathbb{R}^d)} \mathbb{E}_{\xi\sim\mu} \left[ \left(1 + |\xi| \right)^{-k}\cos\left(\xi \cdot x + \theta(\xi) \right)\right]. \nonumber
    \end{align}
    By Lebesgue's dominated convergence theorem, $f \in C(\mathbb{R}^d)$. If $k \geq 1$, then for each $1 \leq i \leq d$, the partial derivative 
    \begin{align}
        \partial_i f(x) = - \Vert f \Vert_{\mathcal{B}^{0}(\mathbb{R}^d)} \mathbb{E}_{\xi \sim \mu} \left[ \left(1 + |\xi| \right)^{-k} \xi_i \sin(\xi \cdot x + \theta(\xi))\right] \nonumber
    \end{align}
    exists and is continuous on $\mathbb{R}^d$ by Lebesgue's dominated convergence theorem. Hence $f \in C^1(\mathbb{R}^d)$. If $k \geq 2$, then the second-order partial derivative
    \begin{align}
        \partial_{ij} f(x) = - \Vert f \Vert_{\mathcal{B}^{k}(\mathbb{R}^d)} \mathbb{E}_{\xi\sim\mu} \left[ \left(1 + |\xi| \right)^{-k} \xi_i \xi_j \cos\left(\xi \cdot x + \theta(\xi) \right)\right] \nonumber
    \end{align}
    exists and is continuous on $\mathbb{R}^d$ by the dominated convergence theorem again. Thus $f \in C^2(\mathbb{R}^d)$. By repeating the argument, one can show that if $f \in \mathcal{B}^{k}(\mathbb{R}^d)$, then $f \in C^{k}(\mathbb{R}^d)$. 
    
    Let $\xi_1, ..., \xi_n$ be independent and identically distributed samples from $\mu$, and 
    \begin{align}
        f_n(x) := \dfrac{1}{n} \sum_{l=1}^{n} a_l \cos\left(w_l \cdot x +b_l \right), \nonumber
    \end{align}
    where 
    \begin{align}
        a_l = \Vert f \Vert_{\mathcal{B}^{k}(\mathbb{R}^d)}\left(1 + |\xi_l| \right)^{-k} , \hspace{1em} w_l = \xi_l, \hspace{1em} b_l = \theta(\xi_l). \nonumber
    \end{align}
    Then 
    \begin{align}
        \mathbb{E}_{\mu^{\otimes n}} \left[ \Vert f_n - f \Vert^2_{L^2(K)} \right] & = \mathbb{E}_{\mu^{\otimes n}}\left[ \int_{K} |f_n(x) - f(x)|^2 \dd x\right] \nonumber \\
        = \Vert f \Vert^2_{\mathcal{B}^{k}(\mathbb{R}^d)} & \int_{K} \mathrm{Var}_{\mu^{\otimes n}} \left[ \dfrac{1 }{n} \sum_{l=1 }^{n}\left(1 + |\xi_l| \right)^{-k} \cos\left( \xi_l \cdot x + \theta(\xi_l )\right) \right] \dd x \nonumber \\
        = \dfrac{\Vert f \Vert^2_{\mathcal{B}^{k}(\mathbb{R}^d)}}{n} & \int_{K} \mathrm{Var}_{\xi \sim \mu} \left[ \left(1 + |\xi| \right)^{-k} \cos\left( \xi \cdot x + \theta(\xi)\right)\right] \dd x \nonumber \\
        \leq \dfrac{\Vert f \Vert^2_{\mathcal{B}^{k}(\mathbb{R}^d)}}{n} & \int_{K} \mathbb{E}_{\xi \sim \mu} \left[ \left(1 + |\xi| \right)^{-2k} \cos^2 \left( \xi \cdot x + \theta(\xi)\right)\right] \dd x \nonumber \\
        \leq \dfrac{\Vert f \Vert^2_{\mathcal{B}^{k}(\mathbb{R}^d)}}{n} & \int_{K} \mathbb{E}_{\xi \sim \mu} \left[ \left(1 + |\xi| \right)^{-2k} \right] \dd x. \nonumber
    \end{align}
    Since $(1 + |\xi|)^{-2k} \leq 1$, 
    \begin{align}
        \mathbb{E}_{\mu^{\otimes n}} \left[ \Vert f_n - f \Vert^2_{L^2(K)} \right] \leq  \dfrac{|K| \Vert f \Vert^2_{\mathcal{B}^{k}(\mathbb{R}^d)}}{n}. \nonumber
    \end{align}
    If $k \geq 1$, the partial derivative $\partial_i f$ exists for $1 \leq i \leq d$. Therefore, 
    \begin{align}
        \mathbb{E}_{\mu^{\otimes n}} \left[ \sum_{i=1}^{d} \Vert \partial_i f_n - \partial_i f \Vert^2_{L^2(K)} \right] & = \mathbb{E}_{\mu^{\otimes n}} \left[\sum_{i=1}^{d} \int_{K} |\partial_i f_n(x) - \partial_i f(x)|^2 \dd x \right] \nonumber \\
        = \Vert f \Vert^2_{\mathcal{B}^{k}(\mathbb{R}^d)} & \sum_{i=1}^{d} \int_{K} \mathrm{Var}_{\mu^{\otimes n}} \left[ \dfrac{1 }{n} \sum_{l=1 }^{n}\left(1 + |\xi_l| \right)^{-k} \xi_{l,i} \sin\left( \xi_l \cdot x + \theta(\xi_l )\right) \right] \dd x \nonumber \\
        = \dfrac{\Vert f \Vert^2_{\mathcal{B}^{k}(\mathbb{R}^d)}}{n} & \sum_{i=1}^{d} \int_{K} \mathrm{Var}_{\xi \sim \mu} \left[ \left(1 + |\xi| \right)^{-k} \xi \sin\left( \xi \cdot x + \theta(\xi)\right)\right] \dd x  \nonumber \\
        \leq \dfrac{\Vert f \Vert^2_{\mathcal{B}^{k}(\mathbb{R}^d)}}{n} & \int_{K} \mathbb{E}_{\xi \sim \mu} \left[ \left(1 + |\xi| \right)^{-2k} |\xi|^2 \sin^2 \left( \xi \cdot x + \theta(\xi)\right)\right] \dd x \nonumber \\
        \leq \dfrac{\Vert f \Vert^2_{\mathcal{B}^{k}(\mathbb{R}^d)}}{n} & \int_{K} \mathbb{E}_{\xi \sim \mu} \left[ \left(1 + |\xi| \right)^{-2k} |\xi|^2 \right] \dd x. \nonumber
    \end{align}
    Since $(1 +|\xi|)^{-2k}(1 + |\xi|^2) \leq 1$ when $k \geq 1$, we obtain
    \begin{align}
        \mathbb{E}_{\mu^{\otimes n}} \left[ \Vert f_n - f \Vert^2_{H^1(K)} \right] \leq \dfrac{\Vert f \Vert^2_{\mathcal{B}^{k}(\mathbb{R}^d)}}{n} & \int_{K} \mathbb{E}_{\xi \sim \mu} \left[ \left(1 + |\xi| \right)^{-2k} (1 + |\xi|^2) \right] \dd x \leq  \dfrac{|K| \Vert f \Vert^2_{\mathcal{B}^{k}(\mathbb{R}^d)}}{n}. \nonumber
    \end{align}
    If $k \geq 2$, the second-order partial derivative $\partial_{ij} f$ exists for any $1 \leq i, j \leq d$. Similarly, we obtain 
    \begin{align}
        \mathbb{E}_{\mu^{\otimes n}} \left[ \sum_{i,j=1}^{d} \Vert \partial_{ij} f_n - \partial_{ij} f \Vert^2_{L^2(K)} \right] 
        \leq \dfrac{\Vert f \Vert^2_{\mathcal{B}^{k}(\mathbb{R}^d)}}{n} & \int_{K} \mathbb{E}_{\xi \sim \mu} \left[ \left(1 + |\xi| \right)^{-2k} |\xi|^4 \right] \dd x. \nonumber
    \end{align}
    Since $\left(1 + |\xi| \right)^{-2k} \left(1 + |\xi|^2 + |\xi|^4 \right) \leq 1 $ when $k \geq 2$, it follows that
    \begin{align}
        \mathbb{E}_{\mu^{\otimes n}} \left[ \Vert f_n - f \Vert^2_{H^2(K)} \right] & \leq \dfrac{\Vert f \Vert^2_{\mathcal{B}^{k}(\mathbb{R}^d)}}{n} \int_{K} \mathbb{E}_{\xi \sim \mu} \left[ \left(1 + |\xi| \right)^{-2k} \left(1 + |\xi|^2 + |\xi|^4 \right) \right] \dd x \nonumber\\
        & \leq  \dfrac{|K| \Vert f \Vert^2_{\mathcal{B}^{k}(\mathbb{R}^d)}}{n}. \nonumber
    \end{align}
    By applying the argument in the same way $k+1$ times, one can conclude that
    \begin{equation*}
        \mathbb{E}_{\mu^{\otimes n}} \left[ \Vert f_n - f \Vert^2_{H^k(K)} \right] \leq \dfrac{|K| \Vert f \Vert^2_{\mathcal{B}^{k}(\mathbb{R}^d)}}{n}. \qedhere
    \end{equation*}
\end{proof}

\begin{proof}[Proof of Proposition \ref{prop:barron_alg}]
    (1) If $f, g \in \mathcal{B}^{s}(\mathbb{R}^d)$, then
    \begin{align}
        \Vert fg \Vert_{\mathcal{B}^s(\mathbb{R}^d)} & \leq \int_{\mathbb{R}^d} \int_{\mathbb{R}^d} |\hat{f}(\xi)| |\hat{g}(\eta - \xi)| (1 + |\eta|)^{s} \dd \xi \dd \eta \nonumber \\
        & \leq \int_{\mathbb{R}^d} \int_{\mathbb{R}^d} |\hat{f}(\xi)| |\hat{g}(\eta - \xi)| (1 + |\xi|)^s (1 + |\eta - \xi|)^s \dd\xi \dd\eta. \nonumber \\
        & =  \Vert f \Vert_{\mathcal{B}^s(\mathbb{R}^d)} \Vert g \Vert_{\mathcal{B}^s(\mathbb{R}^d)}. \nonumber
    \end{align}
    (2) If $f \in \mathcal{B}^{s+1}(\mathbb{R}^d)$, then 
    \begin{align}
        \Vert \partial_{x_i} f \Vert_{\mathcal{B}^s(\mathbb{R}^d)} & = \int_{\mathbb{R}^d} \left( 1 + |\xi | \right)^{s} \left| \widehat{\partial_{x_i} f} (\xi)\right| \dd \xi \nonumber \\
        & = \int_{\mathbb{R}^d} \left( 1 + |\xi | \right)^{s} \left| \ii \xi_i \widehat{f} (\xi)\right| \dd \xi \nonumber \\
        & =  \int_{\mathbb{R}^d} \left( 1 + |\xi | \right)^{s} \left| \xi_i  \right| \cdot \left| \widehat{f} (\xi)\right| \dd \xi \nonumber \\
        & \leq \int_{\mathbb{R}^d} \left( 1 + |\xi | \right)^{s+1} \left| \widehat{f} (\xi)\right| \dd \xi = \Vert f \Vert_{\mathcal{B}^{s+1}}(\mathbb{R}^d). \nonumber \qedhere
    \end{align}
\end{proof}

\begin{proof}[Proof of Lemma \ref{lmm:constant_c_R1}]
    For any control $u$, we have 
    \begin{align}
        \langle u, u \rangle_{R} = \sum_{i=1}^{m} \sum_{j=1}^{m} u_i R_{ij} u_j. \nonumber
    \end{align}
    Since $R$ is a positive-definite matrix, $\max_{1 \leq i,j \leq m} R_{ij} > 0$. If $u \in \mathcal{B}^{s}(\mathbb{R}^d)$, then 
    \begin{align}
        \Vert \langle u, u\rangle_R \Vert_{\mathcal{B}^{s}(\mathbb{R}^d)} & \leq \max_{1 \leq i,j \leq m} R_{ij} \sum_{i=1}^{m} \sum_{j=1}^{m} \Vert u_i u_j \Vert_{\mathcal{B}^{s}(\mathbb{R}^d)} \nonumber \\
        & \leq \max_{1 \leq i,j \leq m} R_{ij} \sum_{i=1}^{m} \sum_{j=1}^{m} \Vert u_i \Vert_{\mathcal{B}^{s}(\mathbb{R}^d)} \Vert u_j \Vert_{\mathcal{B}^{s}(\mathbb{R}^d)} \hspace{1em} \text{(Proposition \ref{prop:barron_alg} (1))}\nonumber \\
        & \leq \max_{1 \leq i,j \leq m} R_{ij} \Vert u \Vert_{\mathcal{B}^{s}(\mathbb{R}^d)}^2. \nonumber
    \end{align}
    By choosing $C_{R,1} = \max_{1\leq i,j \leq m} R_{ij}$, the claim is proved. 
\end{proof}

\begin{proof}[Proof of Lemma \ref{lmm:constant_c_R2}]
    For each $1 \leq i \leq m$, 
    \begin{align}
        \left( R^{-1} g^T \nabla V \right)_i = \sum_{j=1}^{m} \left(R^{-1} \right)_{ij} \left( g^T \nabla V \right)_j, \nonumber
    \end{align}
    and for each $1 \leq j \leq m$, 
    \begin{align}
        \left( g^T \nabla V \right)_j = \sum_{k=1}^{d} g_{kj} \partial_{k} V. \nonumber
    \end{align}
    If $V \in \mathcal{B}^{s+1}(\mathbb{R}^d)$, then 
    \begin{align}
        \Vert \left( g^T \nabla V \right)_j \Vert_{\mathcal{B}^s(\mathbb{R}^d)} & \leq \sum_{k=1}^{d} \Vert g_{kj} \Vert_{\mathcal{B}^s(\mathbb{R}^d)} \Vert \partial_k V \Vert_{\mathcal{B}^s(\mathbb{R}^d)} \nonumber \\
        & \leq \sum_{k=1}^{d} \Vert g_{kj} \Vert_{\mathcal{B}^s(\mathbb{R}^d)} \Vert V \Vert_{\mathcal{B}^{s+1}(\mathbb{R}^d)}. \hspace{1em} \text{(Proposition \ref{prop:barron_alg} (2))}\nonumber
    \end{align}
    Then
    \begin{align}
        \Vert \left( R^{-1} g^T \nabla V \right)_i \Vert_{\mathcal{B}^s(\mathbb{R}^d)} & \leq \max_{1 \leq j \leq m} \left| \left( R^{-1} \right)_{ij}\right| \sum_{j=1}^{m} \Vert \left( g^T \nabla V \right)_j \Vert_{\mathcal{B}^s(\mathbb{R}^d)} \nonumber \\
        & \leq \max_{1 \leq j \leq m} \left| \left( R^{-1} \right)_{ij}\right| \sum_{j=1}^{m} \sum_{k=1}^{d} \Vert g_{kj} \Vert_{\mathcal{B}^s(\mathbb{R}^d)} \Vert V \Vert_{\mathcal{B}^{s+1}(\mathbb{R}^d)} \nonumber \\
        & = \max_{1 \leq j \leq m} \left| \left( R^{-1} \right)_{ij}\right| \Vert g \Vert_{\mathcal{B}^{s}(\mathbb{R}^d)} \Vert V \Vert_{\mathcal{B}^{s+1}(\mathbb{R}^d)}, \nonumber
    \end{align}
    and so
    \begin{align}
        \Vert  R^{-1} g^T \nabla V \Vert_{\mathcal{B}^s(\mathbb{R}^d)} \leq \sum_{i=1}^{m} \Bigl( \max_{1 \leq j \leq m} \left| \left( R^{-1} \right)_{ij}\right| \Bigr) \Vert g \Vert_{\mathcal{B}^{s}(\mathbb{R}^d)} \Vert V \Vert_{\mathcal{B}^{s+1}(\mathbb{R}^d)}. \nonumber
    \end{align}
    For the claim to be established, it suffices to choose 
    \begin{equation*}
        C_{R,2} := \dfrac{1 }{2 } \sum_{i=1}^{m} \Bigl( \max_{1 \leq j \leq m} \left| \left( R^{-1} \right)_{ij}\right| \Bigr). \qedhere
    \end{equation*}
\end{proof}

\bibliographystyle{plain}
\bibliography{refs}

\end{document}